\theoremstyle{plain}
\newtheorem{thm}{Theorem}[section]
\newtheorem{lemma}[thm]{Lemma}
\newtheorem{prop}[thm]{Proposition}
\newtheorem{cor}[thm]{Corollary}
\theoremstyle{definition}
\theoremstyle{remark}
\newtheorem{remark}[thm]{Remark}
\newcommand{\nc}{\newcommand}
\def\makeop#1{\expandafter\def\csname#1\endcsname
  {\mathop{\rm #1}\nolimits}\ignorespaces}
\def\makebb#1{\expandafter\def
  \csname bb#1\endcsname{{\mathbb{#1}}}\ignorespaces}
\def\makebf#1{\expandafter\def\csname bf#1\endcsname{{\bf
      #1}}\ignorespaces}
\def\makegr#1{\expandafter\def
  \csname gr#1\endcsname{{\mathfrak{#1}}}\ignorespaces}
\def\makescr#1{\expandafter\def
  \csname scr#1\endcsname{{\EuScript{#1}}}\ignorespaces}
\def\makecal#1{\expandafter\def\csname cal#1\endcsname{{\mathcal
      #1}}\ignorespaces}
\def\doLetters#1{#1A #1B #1C #1D #1E #1F #1G #1H #1I #1J #1K #1L #1M
                 #1N #1O #1P #1Q #1R #1S #1T #1U #1V #1W #1X #1Y #1Z}
\def\doletters#1{#1a #1b #1c #1d #1e #1f #1g #1h #1i #1j #1k #1l #1m
                 #1n #1o #1p #1q #1r #1s #1t #1u #1v #1w #1x #1y #1z}
     \def\qed{\qedmark\medbreak}%
\def\qedmark{{\enspace\vrule height 6pt width 5pt depth 1.5pt}}%
   \def\Ga{{{\bbG}_{\rm a}}}
\def\Fpbar{\overline{\bbF}_p}
\def\Qpbar{\overline{{\bbQ}_p}}
\def\Qbar{\overline{\bbQ}}
\newcommand{\Z}{\mathbb Z}
\newcommand{\Q}{\mathbb Q}
\newcommand{\C}{\mathbb C}
\newcommand{\A}{\mathbb A}    
\newcommand{\F}{\mathbb F}
\newcommand{\npr}{\noindent }
\newcommand{\<}{\langle}   
\renewcommand{\>}{\rangle} 
\nc{\embed}{\hookrightarrow}
\newcommand{\ch}{characteristic }
\newcommand{\ac}{algebraically closed }
\newcommand{\dieu}{Dieudonn\'{e} }
\nc{\ol}{\overline}
\nc{\wt}{\widetilde}
\nc{\opp}{\mathrm{opp}}
\def\ul{\underline}
\begin{document}
\renewcommand{\thefootnote}{\fnsymbol{footnote}}
\setcounter{footnote}{-1}
\numberwithin{equation}{section}


\title{Mass formula for supersingular abelian surfaces}
\author{Chia-Fu Yu and Jeng-Daw Yu}
\address{
(C.-F.~Yu) Institute of Mathematics \\
Academia Sinica \\
128 Academia Rd.~Sec.~2, Nankang\\
Taipei, Taiwan \\ and NCTS (Taipei Office)}
\email{chiafu@math.sinica.edu.tw}
\address{
(J.-D.~Yu) Department of Mathematics \\
National Taiwan University \\
Taipei, Taiwan}
\email{jdyu@math.ntu.edu.tw}

\date{\today.  
}
\begin{abstract}
We show a mass formula for arbitrary supersingular abelian surfaces in
\ch $p$. 
\end{abstract}

\maketitle


\section{Introduction}
\label{sec:01}
\def\Mass{{\rm Mass}}

In \cite{chai:ho} Chai studied prime-to-$p$ Hecke correspondences on
Siegel moduli spaces in \ch $p$ and proved a deep geometric result
about ordinary $\ell$-adic Hecke orbits for any prime $\ell\neq
p$. Recently Chai and Oort gave a complete answer to what this $\ell$-adic
Hecke orbit can be; see \cite{chai:sketch}.
In this paper we study the arithmetic aspect of supersingular
$\ell$-adic Hecke orbits in the Siegel moduli spaces, the extreme
situation opposite to the ordinary case.
In the case of genus $g=2$, we give a
complete answer to the size of supersingular Hecke orbits.

Let $p$ be a rational prime number and $g\ge 1$ be a positive
integer. Let $N\ge 3$ be a prime-to-$p$
positive integer. Choose a primitive
$N$th root of unity $\zeta_N\in \Qbar\subset \C$ and fix an
embedding $\Qbar \hookrightarrow \Qpbar$. Let $\calA_{g,1,N}$
denote the moduli space over $\Fpbar$ of $g$-dimensional principally
polarized abelian varieties with a symplectic level-$N$ structure
with respect to $\zeta_N$. Let $k$ be an \ac field of \ch $p$.
For each point $x=\ul A_0=(A_0,\lambda_0,\eta_0)$ in
$\calA_{g,1,N}(k)$ and a prime number $\ell\neq p$, 
the $\ell$-adic Hecke orbit $\calH_\ell(x)$ is defined to be the countable
subset of $\calA_{g,1,N}(k)$ that consists of points $\ul A$ such that
there is an $\ell$-quasi-isogeny from $A$ to $ A_0$ that
preserves the polarizations (see \S \ref{sec:02} for definitions). 
%
%
%
It is proved in Chai \cite[Proposition
1]{chai:ho} that the $\ell$-adic Hecke orbit $\calH_\ell(x)$ is finite 
if and only if $x$ is supersingular. Recall that an 
abelian variety $A$ over $k$ is
{\it supersingular} if it is isogenous to a product of supersingular elliptic
curves; $A$ is {\it superspecial} if it is isomorphic to a product of
supersingular elliptic curves. A natural question is whether it is
possible to calculate the size of a supersingular Hecke orbit. The
answer is affirmative, provided that we know its underlying $p$-divisible
group structure explicitly, through the calculation of geometric mass formulas 
(see Section~\ref{sec:02}). This is the task of this paper
where we examine the $p$-divisible group structure of
some non-superspecial abelian varieties. 

Let $x=(A_0,\lambda_0)$ be a $g$-dimensional supersingular principally
polarized abelian varieties over $k$. Let $\Lambda_x$ denote the set
of isomorphism classes of $g$-dimensional supersingular principally
polarized abelian varieties $(A,\lambda)$ over $k$ such that
there exists an isomorphism
$(A,\lambda)[p^\infty]\simeq (A_0,\lambda_0)[p^\infty]$
of the attached quasi-polarized $p$-divisible groups; it is a
finite set (see \cite[Theorem 2.1 and Proposition 2.2]{yu:mass_hb}).
Define the mass $\Mass(\Lambda_x)$ of $\Lambda_x$ as
\begin{equation}
  \label{eq:11}
  \Mass(\Lambda_x):=\sum_{(A,\lambda)\in \Lambda_x}
  \frac{1}{|\Aut(A,\lambda)|}.
\end{equation}
The main result of this paper is computing the geometric mass
$\Mass(\Lambda_x)$ for arbitrary $x$ when $g=2$.

Let $\Lambda_{2,p}^*$ be the set of isomorphism classes of
polarized superspecial abelian surfaces $(A,\lambda)$
with polarization degree
$\deg \lambda=p^2$ over $\Fpbar$ such that $\ker \lambda\simeq
\alpha_p\times \alpha_p$
(see \S \ref{sec:31}). For each member $(A_1,\lambda_1)$ in
$\Lambda_{2,p}^*$, the space of degree-$p$ isogenies
$\varphi:(A_1,\lambda_1)\to 
(A,\lambda)$ with $\varphi^*\lambda=\lambda_1$ over $k$ is a 
projective line $\bfP^1$ over $k$. Write $\bfP^1_{A_1}$ to indicate
the space of $p$-isogenies arising from $A_1$. This family is studied in
Moret-Bailly \cite{moret-bailly:p1}, and also in Katsura-Oort
\cite{katsura-oort:surface}. 
One defines an $\F_{p^2}$-structure on $\bfP^1$ using the
$W(\F_{p^2})$-structure of $M_1$ defined by $F^2=-p$, where $M_1$ is
the covariant \dieu module of $A_1$
and $F$ is the absolute Frobenius.
For any supersingular principally
polarized abelian surface $(A,\lambda)$ there exist an $(A_1,\lambda_1)$ in
$\Lambda_{2,p}^*$ and a degree-$p$ isogeny $\varphi:(A_1,\lambda_1)\to
(A,\lambda)$ with $\varphi^*\lambda=\lambda_1$. 
The choice of $(A_1,\lambda_1)$ and $\varphi$ may not be unique. 
However, the degree $[\F_{p^2}(\xi):\F_{p^2}]$ of the point
$\xi\in \bfP^1_{A_1}(k)$ that corresponds to $\varphi$ is well-defined.

In this paper we prove
\begin{thm}\label{11} Let $x = (A,\lambda)$ be a supersingular principally
  polarized abelian surface over $k$. Suppose that $(A,\lambda)$ is
  represented by a pair $(\ul A_1, \xi)$, where $\ul A_1\in \Lambda^*_{2,p}$
  and $\xi\in \bfP^1_{A_1}(k)$. Then
  \begin{equation*}
  \Mass(\Lambda_x)=\frac{L_p}{5760},
  \end{equation*}
where
\[ L_p=
\begin{cases}
  (p-1)(p^2+1) & \text{if $\F_{p^2}(\xi)=\F_{p^2}$}, \\
  (p^2-1) (p^4-p^2)& \text{if $[\F_{p^2}(\xi):\F_{p^2}]=2$}, \\
  (p^2-1) |\PSL_2(\F_{p^2})| & \text{otherwise}.
\end{cases} \]
\end{thm}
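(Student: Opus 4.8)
The plan is to compute the mass $\Mass(\Lambda_x)$ via the standard dictionary between polarized supersingular abelian surfaces with prescribed $p$-divisible group and certain adelic double cosets, reducing to a local computation at $p$ controlled by the point $\xi \in \bfP^1_{A_1}(k)$.

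\medskip

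\noindent\textbf{Step 1: Reduce to a local mass at $p$.} First I would invoke the theory already cited (\cite{yu:mass_hb}, and the general Dieudonn\'e-theoretic framework from which the finiteness of $\Lambda_x$ follows) to express $\Mass(\Lambda_x)$ in terms of a product of a global factor and a local factor at $p$. Concretely, fixing the isogeny class (all supersingular abelian surfaces lie in one isogeny class over $k$), the set $\Lambda_x$ is identified with a double coset space $G(\Q)\backslash G(\A_f)/U$, where $G$ is the relevant inner form (the unitary/symplectic similitude group attached to the quaternion algebra $B_{p,\infty}$), and $U$ is a compact open subgroup whose away-from-$p$ part is hyperspecial (so contributes nothing beyond the ``superspecial'' mass) and whose component at $p$ is determined by the quasi-polarized $p$-divisible group $(A,\lambda)[p^\infty]$. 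The prime-to-$p$ part of the computation gives the superspecial mass
\[
\Mass(\Lambda^{\mathrm{ss}}) = \frac{(p-1)(p^2+1)}{5760},
\]
which is the known Eichler/Hashimoto--Ibukiyama mass for principally polarized superspecial abelian surfaces; this explains the denominator $5760$ and the first case.

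\medskip

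\noindent\textbf{Step 2: Express the local factor as a lattice/orbit count on $\bfP^1_{A_1}$.} The content of the paper's setup is that $(A,\lambda)$ is obtained from a superspecial $(A_1,\lambda_1)\in\Lambda^*_{2,p}$ by a degree-$p$ isogeny corresponding to a point $\xi\in\bfP^1_{A_1}(k)$, and the local mass at $p$ depends only on the $\F_{p^2}$-orbit of $\xi$ under $\Aut(A_1,\lambda_1)$ acting on $\bfP^1_{A_1}$ through its image in $\mathrm{PGL}_2(\F_{p^2})$ (equivalently $\PSL_2(\F_{p^2})$ since the automorphism group maps into $\SL_2$ via its action on the Dieudonn\'e module with $F^2=-p$). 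Thus I would compute, for each of the three geometric types of $\xi$ --- a rational point (the superspecial locus, $\F_{p^2}(\xi)=\F_{p^2}$), a conjugate pair over $\F_{p^4}$ (degree $2$), or a point of degree $>2$ (the ``generic'' non-superspecial case) --- the corresponding orbit length and stabilizer order, and feed these into the mass formula. The transition from the superspecial mass to $\Mass(\Lambda_x)$ is multiplication by the index $[\text{new }U_p : \text{old }U_p]$, which is exactly the number of points in $\bfP^1$ over the appropriate field modulo the local automorphisms, i.e.\ a point count:
\begin{align*}
L_p &= (p-1)(p^2+1)\cdot 1 && \text{($\xi$ superspecial, nothing new),}\\
L_p &= (p-1)(p^2+1)\cdot(p^2-1)p^2/(p^2+1) && \text{($[\F_{p^2}(\xi):\F_{p^2}]=2$),}\\
L_p &= (p-1)(p^2+1)\cdot|\PSL_2(\F_{p^2})|/(p^2+1) && \text{(generic $\xi$),}
\end{align*}
where I have used $|\bfP^1(\F_{p^2})|=p^2+1$ to cancel, $|\bfP^1(\F_{p^4})\setminus\bfP^1(\F_{p^2})| = p^4-p^2$, and the fact that for a generic point the local group acts simply transitively on its orbit of size $|\PSL_2(\F_{p^2})|$. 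Simplifying, $(p-1)(p^2+1)\cdot(p^4-p^2)/(p^2+1) = (p-1)p^2(p^2-1) = (p^2-1)(p^4-p^2)$ for the second case, matching the stated formula, and similarly for the third.

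\medskip

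\noindent\textbf{Main obstacle.} The delicate point is Step 2: justifying rigorously that the local mass factor at $p$ is governed purely by the $\F_{p^2}$-isomorphism type of $(A,\lambda)[p^\infty]$, and that this isomorphism type is in bijective correspondence with the $\Aut(A_1,\lambda_1)$-orbit (over $\F_{p^2}$) of $\xi$ on $\bfP^1_{A_1}$ --- in other words, that two degree-$p$ isogenies from superspecial surfaces yield isomorphic quasi-polarized $p$-divisible groups if and only if the corresponding points are conjugate under the local similitude group. This requires a careful Dieudonn\'e-module analysis: describe the $W(\F_{p^2})$-lattice chain arising from a $p$-isogeny, identify its automorphism group (which must be computed as a $p$-adic group, not just its reduction), and match the resulting local lattice-counting problem with the point-counting on $\bfP^1$. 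The three cases in the theorem then emerge from the three possibilities for how $\Aut(A_1,\lambda_1)$, acting through $\PSL_2(\F_{p^2})$ on $\bfP^1$, can stabilize a Galois orbit of points: trivially split, a quadratic orbit, or a generic free orbit. Once this identification is in place, the arithmetic is the elementary point-count above combined with the known superspecial mass.
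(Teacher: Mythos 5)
Your overall architecture (identify $\Lambda_x$ with a double coset space, fix the prime-to-$p$ data, and compute a relative local index at $p$ governed by the Galois-theoretic type of $\xi$) is the right one and matches the paper's strategy in spirit. But there is a concrete error that propagates into wrong answers: you take as your base point the \emph{principally polarized} superspecial mass $M_2=\frac{(p-1)(p^2+1)}{5760}$ and then multiply by ``orbit size divided by $p^2+1$''. Your own expressions do not simplify as you claim: in case (ii),
\[
(p-1)(p^2+1)\cdot\frac{p^4-p^2}{p^2+1}=(p-1)(p^4-p^2)\neq (p^2-1)(p^4-p^2),
\]
and similarly in case (iii) you get $(p-1)|\PSL_2(\F_{p^2})|$ rather than $(p^2-1)|\PSL_2(\F_{p^2})|$; both are off by a factor of $p+1$. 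The reason is structural, not a slip: the paper's proof pivots not on $M_2$ but on the mass $M_2^*=\frac{p^2-1}{5760}$ of the auxiliary family $\Lambda^*_{2,p}$ of degree-$p^2$ polarized superspecial surfaces with $\ker\lambda\simeq\alpha_p\times\alpha_p$ (Theorem~\ref{25}(2), Corollary~\ref{26}). The parahoric at $p$ attached to a point of $\Lambda^*_{2,p}$ genuinely contains $U_x$ when $a(A)=1$, so $\Mass(\Lambda_x)=M_2^*\cdot[U_{x_1}:U_x]$ is an honest integer multiple of $M_2^*$; by contrast $M_2/M_2^*=\frac{p^2+1}{p+1}$ is not an integer, reflecting that the principally polarized superspecial parahoric and $U_x$ are not nested, so no ``new $U_p$ : old $U_p$'' index exists from your chosen base point. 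Without the independent input $M_2^*$ your scheme cannot produce the stated $L_p$.

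Separately, your Step 2 leaves the two genuinely hard local facts unaddressed. First, the index $[U_{x_1}:U_x]$ is computed in the paper by showing that the reduction map $\pi:U_{x_1}\to\SL_2(\F_{p^2})$ (automorphisms of $\wt N/V\wt N$ preserving the induced symplectic form) is \emph{surjective}; this is Proposition~\ref{41} and requires a successive-approximation lifting argument in $M_2(O_D)$ against the quaternion hermitian form $\begin{pmatrix}0&-\Pi\\ \Pi&0\end{pmatrix}$ --- it is not automatic. Second, one needs $\ker\pi\subset U_x$ (Lemma~\ref{42}) and an explicit determination of $\pi(U_x)$ as the stabilizer of the line $k\cdot v$, which the paper extracts from the endomorphism-ring computation (Proposition~\ref{32}): the stabilizer is the norm-one subgroup of $\F_{p^2}(\xi)^\times$, of order $p^2+1$, when $[\F_{p^2}(\xi):\F_{p^2}]=2$, and $\{\pm1\}$ when the degree is at least $3$. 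These give $[U_{x_1}:U_x]=|\SL_2(\F_{p^2})|/(p^2+1)=p^4-p^2$ and $|\PSL_2(\F_{p^2})|$ respectively, which combined with $M_2^*$ yield the theorem. Your appeal to ``simple transitivity on an orbit of size $|\PSL_2(\F_{p^2})|$'' gestures at the right stabilizer computation but neither proves the surjectivity nor identifies the stabilizers, and in any case feeds into the wrong normalization.
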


Theorem~\ref{11} calculates the cardinality of $\ell$-adic Hecke
orbits $\calH_\ell(x)$, as one has (Corollary~\ref{23})
\[ |\calH_\ell(x)|=|\Sp_{2g}(\Z/N\Z)|\cdot\Mass(\Lambda_x). \] 
We mention that the function field analogue of Theorem~\ref{11} where
supersingular abelian surfaces are replaced by supersingular Drinfeld
modules is established in \cite{yu-yu:ssd}. 
  
This paper is organized as follows. In Section~\ref{sec:02} we
describe the relationship between supersingular $\ell$-adic Hecke
orbits and mass formulas. We develop the mass formula for the
orbits of certain superspecial abelian varieties. 
In Section~\ref{sec:03} we
compute the endomorphism ring of any supersingular abelian surface.
The proof of the main theorem is given in the last section.

\def\DS{{\rm DS}}

\section{Hecke orbits and mass formulas}
\label{sec:02}
\def\qisom{{\rm Q\text{-}isom}}
\newcommand{\double}[2]{#1(\Q)\backslash #1(\A_f)/#2}
\newcommand{\doublecoset}[1]{\double{#1}{#1(\hat \Z)}} 
Let $g, p, N, \ell, \calA_{g,1,N},k$ be as in the previous
section. We work with a slightly bigger moduli space in which
the objects are not necessarily equipped with principal
polarizations. 
It is indeed more convenient to work in this setting. 
Let $\calA_{g,p^*,N}=\cup_{m\ge 1} \calA_{g,p^m,N}$ be the moduli
space over $\Fpbar$ of $g$-dimensional abelian varieties
together with a  $p$-power degree polarization and a symplectic
level-$N$ structure with respect to $\zeta_N$. Write $\calA_{g,p^*}$
for the moduli stack over $\Fpbar$ that parametrizes $g$-dimensional
$p$-power degree polarized abelian varieties. 
For any point  $x=\ul A_0=(A_0,\lambda_0,\eta_0)$ in
$\calA_{g,p^*,N}(k)$, the $\ell$-adic Hecke orbit $\calH_\ell(x)$ is
defined to be the countable subset of $\calA_{g,p^*,N}(k)$ that consists of
points $\ul A$ such that there is an $\ell$-quasi-isogeny from $A$ to
$ A_0$ that preserves the polarizations. 
An $\ell$-quasi-isogeny from $A$ to
$ A_0$ is an element $\varphi\in \Hom(A,A_0)\otimes \Q$ such that
$\ell^m\varphi$, for some integer $m\ge0$, is an isogeny of
$\ell$-power degree. 

\subsection{Group theoretical interpretation}
Assume that $x$ is supersingular. Let $G_{x}$ be the
automorphism group scheme over $\Z$ associated to $\ul A_0$; for any
commutative ring $R$, the group of its $R$-valued points is defined by
\begin{equation*}
  G_{x}(R)=\{h\in (\End_k(A_{0})\otimes R)^\times \, | \, h' h=1\},
\end{equation*}
where $h\mapsto h'$ is the Rosati involution induced by
$\lambda_0$. 
%
%
Let $\Lambda_{x,N}\subset \calA_{g,p^*,N}(k)$ be the subset
consisting of objects $(A,\lambda,\eta)$ such that there is an
isomorphism $\epsilon_p: (A,\lambda)[p^\infty]\simeq
(A_0,\lambda_0)[p^\infty]$ of quasi-polarized $p$-divisible groups.  
Since $\ell$-quasi-isogenies do not change the associated
$p$-divisible group structure, we have the inclusion 
$\calH_\ell(x)\subset\Lambda_{x,N}$.
\begin{prop}\label{21}
Notations and assumptions as above.

(1) There is a natural isomorphism $\Lambda_{x,N}\simeq
    G_x(\Q)\backslash G_x(\A_f)/K_N$ of pointed sets, 
    where $K_N$ is the stabilizer of 
    $\eta_0$ in $G_x(\hat \Z)$. 

(2) One has $\calH_\ell(x)=\Lambda_{x,N}$.   
\end{prop}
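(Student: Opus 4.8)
The plan is to prove (1) by a standard "uniformization by the adelic automorphism group" argument, and then deduce (2) by exhibiting enough $\ell$-quasi-isogenies to realize every element of the double coset.

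For part (1), I would start from a fixed base point $\ul A_0 = (A_0,\lambda_0,\eta_0)$ and set up the map from $G_x(\A_f)$ to $\Lambda_{x,N}$ as follows. Given $h \in G_x(\A_f)$, I want to produce a triple $(A,\lambda,\eta)$ equipped with a quasi-isogeny to $(A_0,\lambda_0)$. The key point is that since $x$ is supersingular, the associated $p$-divisible group $(A_0,\lambda_0)[p^\infty]$ is isoclinic and the quasi-isogeny category of such objects is controlled by $\End_k(A_0)\otimes\Q$, which is a central simple algebra over $\Q$ split at no real place and (being supersingular) with the right local invariants; in particular $G_x$ is an inner form of $\GSp_{2g}$-type group that is anisotropic modulo center. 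Concretely: an element $h\in G_x(\A_f)$ prescribes, away from $p$, a new lattice (equivalently a new prime-to-$p$ quasi-isogeny class of abelian variety together with a compatible level structure), and at $p$ the condition $h'h=1$ guarantees that the quasi-polarized $p$-divisible group is carried isomorphically to $(A_0,\lambda_0)[p^\infty]$, so the resulting object lands in $\Lambda_{x,N}$. Two elements $h_1,h_2$ give isomorphic objects iff they differ on the left by $G_x(\Q)$ (an honest isomorphism of abelian varieties over $k$ commuting with everything) and on the right by $K_N$ (the automorphisms of the level structure $\eta_0$), which yields a well-defined injection $G_x(\Q)\backslash G_x(\A_f)/K_N \hookrightarrow \Lambda_{x,N}$. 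Surjectivity is the substantive point: given any $(A,\lambda,\eta)\in\Lambda_{x,N}$, one has by definition an isomorphism $\epsilon_p$ at $p$; away from $p$ one uses that $A$ and $A_0$ are isogenous (both supersingular, hence both isogenous to a power of a fixed supersingular elliptic curve) to choose a prime-to-$p$ quasi-isogeny $A\to A_0$, and one patches this with $\epsilon_p$ to get a global quasi-isogeny whose "defect from being an isomorphism" is exactly the desired adelic element $h$. This is essentially the Rapoport--Zink / Kottwitz-style uniformization specialized to the supersingular (basic) stratum, and I would cite it in that form if a reference is available; the bookkeeping with the level-$N$ structure and with the $p$-component being an honest isomorphism (not merely a quasi-isogeny) is the part requiring care.

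For part (2), the inclusion $\calH_\ell(x)\subset \Lambda_{x,N}$ is already noted in the text (an $\ell$-quasi-isogeny does not alter the $p$-divisible group). For the reverse inclusion I would use the description from part (1): it suffices to show that $G_x(\A_f) = G_x(\Q)\cdot G_x(\Q_\ell)\cdot K_N^{(\ell)}$, where $K_N^{(\ell)}$ is the prime-to-$\ell$ part of $K_N$, i.e.\ that the $\ell$-adic points together with the rational points and the away-from-$\ell$ level group already fill up the whole adelic double coset. Equivalently, every point of $\Lambda_{x,N}$ can be connected to $x$ by an isogeny supported only at $\ell$ (after multiplying by a rational automorphism and adjusting the level structure). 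This follows from strong approximation: $G_x^{\der}$ is a simply connected semisimple group which is $\Q_\ell$-isotropic (it becomes $\Sp_{2g}$ over $\Q_\ell$ since the central simple algebra $\End_k(A_0)\otimes\Q$ splits at $\ell\neq p$), and the reduced-norm/similitude map has image controlled so that the obstruction vanishes; hence $G_x^{\der}(\Q)$ is dense in $G_x^{\der}(\A_f^{(\ell)})$, which gives exactly the needed factorization. I expect the main obstacle to be the precise form of the strong approximation argument — one must check that the similitude factor does not obstruct (this is where one uses that $p\neq\ell$ and that $N$ is prime to $p$, so the relevant class group / norm-one conditions behave well), and that the reduction from the double coset with full $K_N$ to the assertion about $\calH_\ell$ correctly tracks the level structure at $\ell$. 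The supersingularity of $x$ enters twice and crucially: once to know $\Lambda_{x,N}$ is finite and the group $G_x$ is of the right (anisotropic mod center) shape, and once to guarantee the needed local isogenies at $\ell$ exist.
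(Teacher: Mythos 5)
Your proposal is correct and takes essentially the same route as the paper: part (1) is the adelic double-coset parametrization (the paper builds the map in the opposite direction, sending $(A,\lambda,\eta)\in\Lambda_{x,N}$ to the class of the tuple of local discrepancies $\phi_q$ between a global polarization-preserving quasi-isogeny $A_0\to A$ and chosen local isomorphisms at each prime $q$), and part (2) is exactly the strong-approximation argument you describe. The only point to streamline is that $G_x$, defined by $h'h=1$, is already a semisimple simply connected inner form of $\Sp_{2g}$ --- there is no similitude factor, so the reduced-norm obstruction you worry about does not arise, and strong approximation applies to $G_x$ itself, which is isotropic at $\ell$ because $\End_k(A_0)\otimes\Q_\ell$ splits.
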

\begin{proof}
  (1) This is a special case of \cite[Theorem 2.1 and Proposition
      2.2]{yu:mass_hb}. We sketch the proof for the reader's
      convenience. Let $\ul A$ be an element in $\Lambda_{x,N}$. As $A$ is
      supersingular, there is a quasi-isogeny $\varphi:A_0\to A$ such
      that $\varphi^*\lambda=\lambda_0$. For each prime $q$ (including
      $p$ and $\ell$), choose an isomorphism $\epsilon_q: \ul
      A_0[q^\infty]\simeq \ul A[q^\infty]$ of $q$-divisible groups 
      compatible
      with polarizations and level structures. There is an element
      $\phi_q\in G_x(\Q_q)$ such that $\varphi \phi_q=\epsilon_q$ for
      all $q$. The map $\ul A\mapsto [(\phi_q)]$ gives a well-defined map
      from $\Lambda_{x,N}$ to $G_x(\Q)\backslash G_x(\A_f)/K_N$. 
      It is not hard
      to show that this is a bijection. 

  (2) The inclusion $\calH_\ell(x)\subset \Lambda_{x,N}$ under the
      isomorphism in (1) is given by
\[   [G_x(\Q)\cap G_x(\hat \Z^{(\ell)})] \backslash 
[G_x(\Q_\ell)\times G_x(\hat \Z^{(\ell)})]/K_N 
\subset G_x(\Q)\backslash G_x(\A_f)/K_N.  \]
Since the group $G_x$ is semi-simple and simply-connected, the strong
approximation 
shows that $G_x(\Q)\subset G_x(\A^{(\ell)}_f)$ is dense. The equality
then follows immediately. \qed
\end{proof}

\begin{cor}\label{22}
  Let $\ul A_i=(A_i,\lambda_i,\eta_i)$, $i=1,2$, be two supersingular
  points in $\calA_{g,p^*,N}(k)$. Suppose that there is an isomorphism
  of the associated quasi-polarized $p$-divisible groups. Then for any
  prime $\ell\nmid pN$ there is an $\ell$-quasi-isogeny
  $\varphi:A_1\to A_2$ which preserves the polarizations and level 
  structures.
\end{cor}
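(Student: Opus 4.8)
The plan is to obtain this as a direct consequence of Proposition~\ref{21}, with one of the two given objects playing the role of the base point. Concretely, I would set $x=\ul A_2=(A_2,\lambda_2,\eta_2)$; by hypothesis $x$ is supersingular, so Proposition~\ref{21} applies. The assumption that the quasi-polarized $p$-divisible groups attached to $\ul A_1$ and $\ul A_2$ are isomorphic is, verbatim, the condition defining the subset $\Lambda_{x,N}\subset\calA_{g,p^*,N}(k)$, so it says precisely that $\ul A_1\in\Lambda_{x,N}$. Since $\ell\nmid pN$, Proposition~\ref{21}(2) gives $\calH_\ell(x)=\Lambda_{x,N}$, whence $\ul A_1\in\calH_\ell(x)$.

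It then remains only to translate membership in the Hecke orbit back into an isogeny. By the definition of $\calH_\ell(x)$, from $\ul A_1\in\calH_\ell(x)$ one extracts an $\ell$-quasi-isogeny $\varphi\colon A_1\to A_2$ with $\varphi^*\lambda_2=\lambda_1$, i.e.\ preserving the polarizations. Because $\ell\nmid N$, a suitable power $\ell^m\varphi$ is an honest isogeny whose kernel is an $\ell$-group, so $\varphi$ restricts to an isomorphism $A_1[N]\isoto A_2[N]$; one must then also check that $\varphi$ can be chosen so as to carry $\eta_1$ to $\eta_2$. For this I would trace through the identification $\Lambda_{x,N}\simeq G_x(\Q)\backslash G_x(\A_f)/K_N$ of Proposition~\ref{21}(1): its construction already uses isomorphisms of $q$-divisible groups compatible with the level structures, so the double coset representing $\ul A_1$ encodes an $\ell$-quasi-isogeny compatible with $\eta_1,\eta_2$. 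This yields exactly the assertion.

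I do not expect any genuine obstacle here: the substantive content is entirely inside Proposition~\ref{21}, and in particular the only non-formal input is strong approximation for the semisimple simply connected group $G_x$, which is what upgrades $\calH_\ell(x)$ to all of $\Lambda_{x,N}$ and has already been invoked. The only points requiring a modicum of care are bookkeeping ones: matching the $p$-divisible-group hypothesis with the defining condition of $\Lambda_{x,N}$, and keeping track of the level-$N$ structure under an $\ell$-quasi-isogeny with $\ell\nmid N$.
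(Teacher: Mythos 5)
Your proposal is correct and takes essentially the same route as the paper: its own proof is a one-line invocation of strong approximation, choosing a double-coset representative in $G_x(\Q_\ell)\times K_N^{(\ell)}$, which is precisely the refinement you describe when you ``trace through'' Proposition~\ref{21}(1) to secure compatibility with the level structures. The only cosmetic difference is that you route through Proposition~\ref{21}(2) (itself proved by strong approximation) rather than re-invoking strong approximation directly.
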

%
%
\begin{proof}
  This follows from the strong approximation property for $G_x$ that
  any element $\phi$ in the double space $G_x(\Q)\backslash
  G_x(\A_f)/K_N$ can be represented by an element in
  $G_x(\Q_\ell)\times K^{(\ell)}_N$, where $K^{(\ell)}_N\subset
  G_x(\hat \Z^{(\ell)})$ is the prime-to-$\ell$ component of $K_N$. \qed
\end{proof}

Recall that we denote by $\Lambda_x$ the set
of isomorphism classes of $g$-dimensional supersingular $p$-power
degree polarized abelian varieties $(A,\lambda)$ over $k$ such that
there is an isomorphism 
$(A,\lambda)[p^\infty]\simeq (A_0,\lambda_0)[p^\infty]$, and 
define the mass $\Mass(\Lambda_x)$ of $\Lambda_x$ as
\begin{equation*}
  \Mass(\Lambda_x):=\sum_{(A,\lambda)\in \Lambda_x}
  \frac{1}{|\Aut(A,\lambda)|}.
\end{equation*}
Similarly, we define
\begin{equation*}
  \Mass(\Lambda_{x,N}):=\sum_{(A,\lambda,\eta)\in \Lambda_{x,N}}
  \frac{1}{|\Aut(A,\lambda,\eta)|}.
\end{equation*}

\begin{cor}\label{23}
  One has $|\calH_\ell(x)|=|\Sp_{2g}(\Z/N\Z)|\cdot \Mass(\Lambda_x)$. 
\end{cor}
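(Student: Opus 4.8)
The plan is to relate $|\calH_\ell(x)|$ to $\Mass(\Lambda_{x,N})$ and then to compare the leveled mass with the unleveled one. By Proposition~\ref{21}(2) we have $\calH_\ell(x)=\Lambda_{x,N}$ as subsets of $\calA_{g,p^*,N}(k)$. Since $N\ge 3$ is prime to $p$ and the level-$N$ structure is full symplectic, any automorphism of $(A,\lambda)$ fixing $\eta$ is trivial (rigidity of level structures); hence every point of $\Lambda_{x,N}$ has trivial automorphism group, so $|\calH_\ell(x)|=|\Lambda_{x,N}|=\Mass(\Lambda_{x,N})$. It therefore suffices to prove
\begin{equation*}
  \Mass(\Lambda_{x,N})=|\Sp_{2g}(\Z/N\Z)|\cdot\Mass(\Lambda_x).
\end{equation*}

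For this I would use the forgetful map $\pi\colon\Lambda_{x,N}\to\Lambda_x$, $(A,\lambda,\eta)\mapsto(A,\lambda)$, which is surjective because over an algebraically closed field every $(A,\lambda)$ with the prescribed $p$-divisible group admits some symplectic level-$N$ structure, and changing $\eta$ does not affect membership in the subset defined by the $p$-divisible group condition. The key point is to describe the fiber over a fixed $(A,\lambda)$: the group $\Sp_{2g}(\Z/N\Z)$ acts (say on the right) on the set of symplectic level-$N$ structures on $(A,\lambda)$, transitively and with trivial stabilizers, so this set is a torsor of cardinality $|\Sp_{2g}(\Z/N\Z)|$. The automorphism group $\Aut(A,\lambda)$ acts on the same set, and two level structures give isomorphic objects in $\Lambda_{x,N}$ exactly when they lie in the same $\Aut(A,\lambda)$-orbit; thus the fiber $\pi^{-1}(A,\lambda)$ is identified with $\Aut(A,\lambda)\backslash\Sp_{2g}(\Z/N\Z)$. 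Counting this orbit set by Burnside, or more simply by the orbit-stabilizer formula together with the triviality of automorphisms of leveled objects, gives
\begin{equation*}
  \sum_{(A,\lambda,\eta)\in\pi^{-1}(A,\lambda)}\frac{1}{|\Aut(A,\lambda,\eta)|}
  =|\pi^{-1}(A,\lambda)|
  =\frac{|\Sp_{2g}(\Z/N\Z)|}{|\Aut(A,\lambda)|}.
\end{equation*}
Summing over $(A,\lambda)\in\Lambda_x$ yields $\Mass(\Lambda_{x,N})=|\Sp_{2g}(\Z/N\Z)|\cdot\Mass(\Lambda_x)$, which combined with the previous paragraph proves the corollary.

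The main obstacle is the bookkeeping in the fiber computation: one must verify carefully that the $\Sp_{2g}(\Z/N\Z)$-action on level structures (for a fixed $(A,\lambda)$) is simply transitive, that the automorphisms of the triple $(A,\lambda,\eta)$ are genuinely trivial for $N\ge 3$, and that the $\Aut(A,\lambda)$-orbit decomposition of the torsor is compatible with the mass weights so that the alternating cancellation in the orbit-counting identity produces exactly the factor $1/|\Aut(A,\lambda)|$. All of this is standard, but it is where the argument actually has content; once it is in place, the summation over $\Lambda_x$ is immediate, and the identification $|\calH_\ell(x)|=\Mass(\Lambda_{x,N})$ follows from Proposition~\ref{21}.
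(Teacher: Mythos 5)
Your argument is correct and follows essentially the same route as the paper: the paper's proof is the chain $|\calH_\ell(x)|=|\Lambda_{x,N}|=\Mass(\Lambda_{x,N})=|G_x(\Z/N\Z)|\cdot\Mass(\Lambda_x)$ together with $|G_x(\Z/N\Z)|=|\Sp_{2g}(\Z/N\Z)|$. Your forgetful-map fiber count --- using rigidity of level structures for $N\ge 3$ and the simply transitive $\Sp_{2g}(\Z/N\Z)$-action on level structures --- is precisely the standard justification of the middle equality, so there is nothing genuinely different here.
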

\begin{proof}
  This follows from 
  \begin{equation*}
  |\calH_\ell(x)|=|\Lambda_{x,N}|=\Mass(\Lambda_{x,N})=|G_x(\Z/N\Z)|\cdot
  \Mass(\Lambda_x)   
  \end{equation*}
and $|G_x(\Z/N\Z)|=|\Sp_{2g}(\Z/N\Z)|$.\qed 
\end{proof}

\subsection{Relative indices}
Write $G'$ for the automorphism group scheme associated to a
principally polarized superspecial point $x_0$. The group $G'_\Q$ is
unique up to isomorphism. This is an inner form of $\Sp_{2g}$ which is
``twisted at $p$ and $\infty$''
(cf.~\S \ref{sec:31} below).
For any supersingular point $x\in
\calA_{g,p*}(k)$, we can regard $G_x(\Z_p)$ as an open compact
subgroup of $G'(\Q_p)$ through a choice of a quasi-isogeny of
polarized abelian varieties between $x_0$ and
$x$. Another choice of quasi-isogeny
gives rise to a subgroup which differs from the previous one 
by the conjugation of an element in
$G'(\Q_p)$. For any two open compact subgroups $U_1, U_2$ of
$G'(\Q_p)$, we put
\begin{equation*}
  \mu(U_1/U_2):=[U_1:U_1\cap U_2][U_2:U_1\cap U_2]^{-1}.
\end{equation*}

\begin{prop}\label{24}
  Let $x_1, x_2$ be two supersingular points in
  $\calA_{g,p^*}(k)$. Then one has
  \begin{equation*}
    \Mass(\Lambda_{x_2})=\Mass(\Lambda_{x_1})\cdot\mu
  (G_{x_1}(\Z_p)/G_{x_2}(\Z_p)).
  \end{equation*}
\[  \]
\end{prop}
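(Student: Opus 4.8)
The plan is to reduce the comparison of the two geometric masses to the group-theoretic formula of Proposition~\ref{21}(1), which identifies $\Lambda_{x_i,N}$ with the double coset space $G_{x_i}(\Q)\backslash G_{x_i}(\A_f)/K_{N,i}$. Since $\Mass(\Lambda_{x_i,N}) = |G_{x_i}(\Z/N\Z)|\cdot \Mass(\Lambda_{x_i})$ (as in the proof of Corollary~\ref{23}) and $|G_{x_i}(\Z/N\Z)| = |\Sp_{2g}(\Z/N\Z)|$ is independent of $i$, it suffices to compare $\Mass(\Lambda_{x_1,N})$ with $\Mass(\Lambda_{x_2,N})$, i.e. the adelic volumes of the two double coset spaces. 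Both $G_{x_1}$ and $G_{x_2}$ are inner forms of the same $\Q$-group $G'_\Q$ (the simply-connected semisimple group twisted at $p$ and $\infty$ attached to the superspecial point $x_0$), and in fact are isomorphic as $\Q$-groups away from $p$; the only discrepancy is at $p$, where $G_{x_1}(\Z_p)$ and $G_{x_2}(\Z_p)$ sit as two different open compact subgroups of the common group $G'(\Q_p)$, well-defined up to $G'(\Q_p)$-conjugacy by the choice of quasi-isogeny to $x_0$.

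The key step is then a standard volume computation: choose a Haar measure on $G'(\A_f)$ and write $\Mass(\Lambda_{x_i,N})$ as $\mathrm{vol}\bigl(G_{x_i}(\Q)\backslash G_{x_i}(\A_f)\bigr)/\mathrm{vol}(K_{N,i})$ weighted appropriately — more precisely, $\Mass(\Lambda_{x_i,N})$ equals the mass of the double coset, which by the usual orbit-counting argument is $\mathrm{vol}(G_{x_i}(\Q)\backslash G_{x_i}(\A_f))\big/\mathrm{vol}(K_{N,i})$ once the measure is normalized. Because the groups agree at every finite place $q\ne p$ and the Tamagawa numbers of these simply-connected groups are equal (indeed both equal to $1$ by Weil's conjecture, but one does not even need the value — only that $G_{x_1}$ and $G_{x_2}$ are inner forms of each other, hence have the same Tamagawa number), the ratio $\Mass(\Lambda_{x_2,N})/\Mass(\Lambda_{x_1,N})$ collapses to the ratio of local volumes at $p$, namely $\mathrm{vol}(G_{x_1}(\Z_p))/\mathrm{vol}(G_{x_2}(\Z_p))$. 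Unwinding this ratio for two open compact subgroups of $G'(\Q_p)$ gives exactly
\[
\frac{\mathrm{vol}(G_{x_2}(\Z_p))}{\mathrm{vol}(G_{x_1}(\Z_p))} = \mu\bigl(G_{x_1}(\Z_p)/G_{x_2}(\Z_p)\bigr) = [G_{x_1}(\Z_p):G_{x_1}(\Z_p)\cap G_{x_2}(\Z_p)]\,[G_{x_2}(\Z_p):G_{x_1}(\Z_p)\cap G_{x_2}(\Z_p)]^{-1},
\]
which is precisely the asserted factor. One must check that this quantity is independent of the conjugacy ambiguity in realizing $G_{x_i}(\Z_p)$ inside $G'(\Q_p)$: conjugating both subgroups by the same element of $G'(\Q_p)$ leaves all indices unchanged, so $\mu$ is well-defined, as already noted in the paragraph preceding the statement.

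The main obstacle is bookkeeping the measures correctly so that the level-$N$ and prime-to-$p$ contributions genuinely cancel: one needs that the isomorphism $G_{x_1,\A_f^{(p)}}\simeq G_{x_2,\A_f^{(p)}}$ (coming from an $\ell$-quasi-isogeny as in Corollary~\ref{22}, or more simply from the fact that the prime-to-$p$ parts of the endomorphism algebras with Rosati involution are canonically identified) carries $K_{N,1}$ to $K_{N,2}$ compatibly, so that the only surviving local factor is at $p$. Alternatively, and perhaps more cleanly, one can avoid Tamagawa numbers entirely and argue directly on finite sets: using the bijection of Proposition~\ref{21}(1) one expresses both masses via a common quasi-isogeny to a fixed superspecial $x_0$, so that $\Lambda_{x_i,N}$ becomes a double coset space for the fixed adelic group $G'(\A_f)$ with level subgroup differing only in its $p$-component ($G_{x_i}(\Z_p)$ versus a reference subgroup), and then the ratio of the two masses is an elementary index computation in $G'(\Q_p)$ yielding $\mu(G_{x_1}(\Z_p)/G_{x_2}(\Z_p))$ by the multiplicativity $\mu(U_1/U_3)=\mu(U_1/U_2)\mu(U_2/U_3)$. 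Either way the content is purely local at $p$ and the global input is just that changing the $p$-adic level subgroup of a fixed adelic double coset space scales the mass by exactly $\mu$.
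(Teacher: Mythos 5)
The paper does not prove this proposition; it simply cites Theorem~2.7 of \cite{yu:mass_hb}. Your argument is a correct reconstruction along exactly the lines of that cited proof: identify $\Lambda_{x_i,N}$ with a double coset space via Proposition~\parref{21}(1), write its mass as $\mathrm{vol}(G_{x_i}(\Q)\backslash G_{x_i}(\A_f))/\mathrm{vol}(K_{N,i})$, observe that the groups and level subgroups are identified at every place away from $p$ (and that the numerators agree since $G_{x_1}$ and $G_{x_2}$ are isomorphic over $\Q$ via a quasi-isogeny, so in particular share a Tamagawa number), and conclude that the ratio of masses is the ratio of local volumes at $p$, which is $\mu$. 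The passage from $\Mass(\Lambda_{x_i,N})$ back to $\Mass(\Lambda_{x_i})$ via the constant factor $|\Sp_{2g}(\Z/N\Z)|$ is also handled correctly. One small point to tighten: the two subgroups $G_{x_1}(\Z_p)$ and $G_{x_2}(\Z_p)$ sit in $G'(\Q_p)$ each up to an \emph{independent} conjugation, so the well-definedness of $\mu$ does not quite follow from conjugating ``both subgroups by the same element''; it does follow immediately from the identity $\mu(U_1/U_2)=\mathrm{vol}(U_1)/\mathrm{vol}(U_2)$ together with the conjugation-invariance of Haar measure on the unimodular group $G'(\Q_p)$, which you already have in hand.
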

\begin{proof}
  See Theorem 2.7 of \cite{yu:mass_hb}. \qed
\end{proof}

\subsection{The superspecial case}
Let $\Lambda_g$ denote the set of isomorphism classes of
$g$-dimensional principally polarized superspecial abelian varieties
over $\Fpbar$. When $g=2D>0$ is even, we denote by $\Lambda^*_{g,p^D}$ 
the set of isomorphism classes of $g$-dimensional polarized
superspecial abelian varieties $(A,\lambda)$ of degree $p^{2D}$
over $\Fpbar$ satisfying $\ker \lambda=A[F]$, where $F:A\to
A^{(p)}$ is the relative Frobenius morphism on $A$.  
Write
\begin{equation*}
  M_g:=\sum_{(A,\lambda)\in\Lambda_g}
  \frac{1}{|\Aut(A,\lambda)|}, \quad
  M_g^*:=\sum_{(A,\lambda)\in\Lambda^*_{g,p^D}} 
  \frac{1}{|\Aut(A,\lambda)|}
\end{equation*}
for the mass attached to the finite sets $\Lambda_g$
  and $\Lambda^*_{g,p^D}$, respectively.

\begin{thm}\label{25}
Notations as above. 

(1) For any positive integer $g$, one has
\begin{equation*}
   M_g=
  \frac{(-1)^{g(g+1)/2}}{2^g} \left \{ \prod_{k=1}^g \zeta(1-2k)
  \right \}\cdot \prod_{k=1}^{g}\left\{(p^k+(-1)^k\right \},
  \end{equation*}
where $\zeta(s)$ is the Riemann zeta function. 
  
(2) For any positive even integer $g=2D$, one has 
\begin{equation*}
   M^*_g=
  \frac{(-1)^{g(g+1)/2}}{2^g} \left \{ \prod_{k=1}^g \zeta(1-2k)
  \right \}\cdot \prod_{k=1}^{D}(p^{4k-2}-1).
  \end{equation*}
\end{thm}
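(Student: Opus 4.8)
The plan is to reduce both mass formulas to a known computation on the level of the finite set $\Lambda_g$ (and $\Lambda^*_{g,p^D}$) by expressing the masses as a product of local densities, following the strategy of the Hashimoto--Ibukiyama mass formula for principally polarized superspecial abelian varieties. First I would fix a base point: by the Deligne--Ogus theorem, all $g$-dimensional superspecial abelian varieties over $\Fpbar$ are isomorphic to $E^g$ for a fixed supersingular elliptic curve $E$, so the set $\Lambda_g$ is in bijection with the set of principal polarizations on $E^g$ up to isomorphism, which by Serre's correspondence is the genus (in the sense of classes in a single genus) of the unimodular hermitian form attached to $\End(E^g) = M_g(\O)$, where $\O$ is the quaternion order $\End(E)$. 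The key input is the classical identity, for a definite quaternion algebra $B_{p,\infty}$ and its maximal order, between the mass of this genus and a product of a global factor (the value of the zeta function at negative odd integers, coming from the Minkowski--Siegel formula for the symplectic group) and local factors at $p$ and $\infty$; this is exactly \cite[Theorem]{hashimoto-ibukiyama} or can be extracted from \cite{yu:mass_hb}.

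The next step is to carry out the analogous computation for $\Lambda^*_{g,p^D}$ with $g = 2D$. Here the polarization has degree $p^{2D}$ with kernel $A[F]$, so on the Dieudonn\'e/hermitian side one is no longer looking at unimodular lattices but at lattices $L$ with $L^\vee/L$ of a prescribed type ($\alpha_p$-type at $p$, reflecting $\ker\lambda = A[F]$). The mass of this genus differs from $M_g$ only in the local factor at $p$: the global zeta-factor and the archimedean factor are identical, since they only depend on the algebraic group $\Sp_{2g}$ over $\Q$ and $\R$. So I would compute the ratio of the two $p$-adic local densities --- the one for the unimodular hermitian lattice of rank $g$ over $\O_p = M_g(\O \otimes \Z_p)$ versus the one for the lattice with the $A[F]$-type dual --- and show this ratio equals $\prod_{k=1}^D (p^{4k-2}-1)$. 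This is a finite, explicit local computation with the non-split quaternion order $\O \otimes \Z_p$ (a maximal order in the division quaternion algebra over $\Q_p$); the relevant local densities were worked out by Hashimoto--Ibukiyama and also appear in \cite{yu:mass_hb}, so one can either cite them or redo the elementary count of the orbit of hermitian forms modulo a power of $\pi$.

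For part (1), once the bijection $\Lambda_g \leftrightarrow$ (genus of unimodular rank-$g$ hermitian $\O$-lattices) is in place, the formula is the Minkowski--Siegel--Hashimoto--Ibukiyama mass formula for that genus; I would just quote it and rewrite the product of local factors at finite places and infinity in the stated closed form $\frac{(-1)^{g(g+1)/2}}{2^g}\prod_{k=1}^g \zeta(1-2k)\cdot\prod_{k=1}^g (p^k + (-1)^k)$, checking the sign and the factor $2^{-g}$ against the functional equation of $\zeta$. For part (2), I would combine part (1) with the local ratio computed above. The main obstacle I anticipate is the $p$-adic local density computation for the non-unimodular lattice in part (2): getting the normalization of the local measure exactly right (so that the global factors genuinely cancel) and correctly counting hermitian forms over the quaternion division algebra modulo $\pi$ is the delicate point, and it is where an off-by-a-power-of-$p$ error would creep in. Everything else --- the reduction to lattices, strong approximation to pass between the adelic double coset and the genus, and the rewriting in terms of $\zeta$-values --- is standard once the base point $E^g$ is fixed.
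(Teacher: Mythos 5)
Your outline matches what the paper does: the paper's proof of (1) is a direct citation of Ekedahl and Hashimoto--Ibukiyama (the mass formula for the principal genus of quaternion hermitian $\O$-lattices on $E^g$), and (2) is cited from Theorem 6.6 of the first author's earlier paper, whose content is exactly the local-index comparison at $p$ between the unimodular genus and the genus with $\ker\lambda = A[F]$ that you describe. Your sketch is a correct account of the arguments behind those citations, so there is nothing to add.
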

\begin{proof}
  (1) This is due to Ekedahl and  Hashimoto-Ibukiyama (see
      \cite[p.159]{ekedahl:ss} and \cite[Proposition 9]
      {hashimoto-ibukiyama:classnumber}, also
      cf. \cite[Section 3]{yu:ss_siegel}).

  (2) See Theorem 6.6 of \cite{yu:ss_siegel}. \qed
\end{proof}

\begin{cor}\label{26} One has
\begin{equation*}
  M_2=\frac{(p-1)(p^2+1)}{5760}, \quad \text{and}\quad 
  M_2^*=\frac{(p^2-1)}{5760}.
  \end{equation*}
\end{cor}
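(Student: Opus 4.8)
The plan is to derive both identities simply by specializing Theorem~\ref{25} to $g=2$ (hence $D=1$ in part (2)) and carrying out the arithmetic. The only external facts I would invoke are the special values $\zeta(-1)=-1/12$ and $\zeta(-3)=1/120$ of the Riemann zeta function.

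For $M_2$, I would evaluate the three factors in the formula of Theorem~\ref{25}(1) one at a time. The prefactor is $(-1)^{g(g+1)/2}/2^{g}=(-1)^{3}/2^{2}=-1/4$. The zeta product is $\zeta(1-2)\,\zeta(1-4)=\zeta(-1)\,\zeta(-3)=(-1/12)(1/120)=-1/1440$. The remaining product is $\prod_{k=1}^{2}\bigl(p^{k}+(-1)^{k}\bigr)=(p-1)(p^{2}+1)$. Multiplying the three factors, the two sign changes cancel and $4\cdot 1440=5760$, giving $M_2=(p-1)(p^{2}+1)/5760$.

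For $M_2^{*}$, the prefactor and the zeta product are unchanged, while the last product in Theorem~\ref{25}(2) collapses to a single term $\prod_{k=1}^{1}\bigl(p^{4k-2}-1\bigr)=p^{2}-1$; hence $M_2^{*}=(p^{2}-1)/5760$.

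There is no genuine obstacle here: the corollary is a direct substitution, and the only points requiring care are the bookkeeping of signs and the factorization $5760=2^{2}\cdot 12\cdot 120$. As an internal consistency check one may note that $M_2/M_2^{*}=(p^{2}+1)/(p+1)$, which can be compared with the relative index $\mu\bigl(G_{x_1}(\Z_p)/G_{x_2}(\Z_p)\bigr)$ of Proposition~\ref{24} attached to the two types of superspecial polarizations; this is not needed for the proof but offers a useful sanity check.
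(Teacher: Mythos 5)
Your proof is correct and is essentially the paper's own argument: the authors likewise obtain both formulas by specializing Theorem~\ref{25} to $g=2$ and using $\zeta(-1)=-1/12$, $\zeta(-3)=1/120$ (they also note the result was obtained independently by Katsura--Oort by a different method). Your arithmetic checks out, so no further comment is needed.
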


\begin{proof}
  This follows from Theorem~\ref{25} and the basic fact
  $\zeta(-1)=\frac{-1}{12}$ and $\zeta(-3)=\frac{1}{120}$. This is
  also obtained in Katsura-Oort 
  \cite[Theorem 5.1 and Theorem 5.2]{katsura-oort:surface} by a method
  different from above. \qed
\end{proof}

\begin{remark} \label{27} Proposition~\ref{21} is generalized to the moduli
  spaces of PEL-type in \cite{yu:smf}, with modification due to the
  failure of the Hasse principle.
\end{remark}

\section{Endomorphism rings}
\label{sec:03}
\newcommand{\DM}{\mathcal{DM}}

In this section we treat the endomorphism rings of supersingular
abelian surfaces.

\subsection{Basic setting}
\label{sec:31}
For any abelian variety $A$ over $k$, the {\it
  $a$-number} $a(A)$ of $A$ is defined by
\begin{equation*}
  a(A):=\dim_k \Hom(\alpha_p, A).
\end{equation*}
Here $\alpha_p$ is the kernel of the Frobenius morphism $F:\Ga \to
\Ga$ on the additive group.
Denote by $\DM$ the category of \dieu modules over $k$.
If $M$ is the (covariant) \dieu module of
$A$, then 
\[ a(A)=a(M):=\dim_k M/(F,V)M. \]
 
Let $B_{p,\infty}$ denote the quaternion algebra over $\Q$ which is ramified
exactly at $\{p,\infty\}$. Let $D$ be the division quaternion algebra
over $\Q_p$ and $O_D$ be the maximal order. Let $W=W(k)$ be the ring
of Witt vectors over $k$,
$B(k):=\Frac (W(k))$ the fraction
field, and $\sigma$ the Frobenius
map on $W(k)$. We also write
$\Q_{p^2}$ and $\Z_{p^2}$ for $B(\F_{p^2})$ and $W(\F_{p^2})$,
respectively.  

Let $A$ be an abelian variety (over any field). 
The endomorphism ring $\End(A)$ is an
order of the semi-simple algebra $\End(A)\otimes \Q$. Determining
$\End(A)$ is equivalent to determining the semi-simple algebra
$\End(A)\otimes \Q$ and all local orders $\End(A)\otimes
\Z_\ell$. Suppose that $A$ is a supersingular abelian variety over
$k$. We know that 
\begin{itemize}
\item $\End(A)\otimes \Q = M_g(B_{p,\infty})$, and 
\item $\End(A)\otimes \Z_\ell=M_{2g}(\Z_\ell)$ for all primes $\ell\neq p$.
\end{itemize}
Therefore, it is sufficient to determine the local
endomorphism ring $\End(A)\otimes \Z_p=\End_{\DM}(M)$, which is an
order of the simple algebra $M_g(D)$.

\subsection{The surface case}
\label{sec:32}
Let $A$ be a supersingular abelian surface
over $k$. There is a superspecial abelian surface $A_1$ and an isogeny
$\varphi:A_1\to A$ of degree $p$. Let $M_1$ and $M$ be the covariant \dieu
modules of $A_1$ and $A$, respectively. One regards $M_1$ as a
submodule of $M$ through the injective map $\varphi_*$. 
Let $N$ be the \dieu submodule in
$M_1\otimes \Q_p$ such that $VN=M_1$. If $a(M)=1$, then $M_1=(F,V)M$
and hence it is determined by $M$. If $a(M)=2$, or equivalently $M$ is
superspecial, then there are $p^2+1$ superspecial submodules 
$M_1\subset M$ such that $\dim_k M/M_1=1$.
    
Now we fix a rank 4 superspecial \dieu module $N$ (and hence fix $M_1$) 
and consider the
space $\calX$ of \dieu submodules $M$ with $M_1\subset M\subset N$ and
$\dim_k N/M=1$. It is clear that $\calX$ is isomorphic to the
projective line  $\bfP^1$ over $k$. Let $\wt N\subset N$ be the
$W(\F_{p^2})$-submodule defined by $F^2=-p$. This gives an
$\F_{p^2}$-structure on $\bfP^1$. It is easy to show the following

\begin{lemma}\label{31}
  Let $\xi\in \bfP^1(k)$ be the point corresponding to a \dieu module
  $M$ in $\calX$. Then $M$ is superspecial if and only if $\xi\in
  \bfP^1(\F_{p^2})$. 
\end{lemma}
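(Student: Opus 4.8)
The plan is to reformulate ``superspecial'' as an operator identity for $F$ on $M$ and then to match that identity against the semilinear descent datum that, by construction, cuts out the $\F_{p^2}$-structure on $\bfP^1$. First I would note that every $M\in\calX$ is genuinely a \dieu submodule of $N$: since $N$ is superspecial one has $FN=VN=M_1$, so $FM\subseteq FN=M_1\subseteq M$ and $VM\subseteq VN=M_1\subseteq M$. Because $M$ has $W$-rank $4$ and is the \dieu module of an abelian surface we have $\dim_k M/FM=\dim_k M/VM=2$, so $M$ is superspecial $\iff a(M)=2\iff FM+VM=FM\iff VM=FM$. Using $FV=VF=p$ and the bijectivity of $F$ on $N\otimes B(k)$ (all slopes are $1/2$), $FM=VM$ is equivalent to $F^2M=pM$, i.e.\ to $(p^{-1}F^2)M=M$. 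The operator $p^{-1}F^2=V^{-1}F$ is $\sigma^2$-semilinear on $N\otimes B(k)$ and preserves $N$, since $F^2N=FVN=pN$.

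Next I would identify $p^{-1}F^2$ on $N$ with the Frobenius descent datum. On $\wt N$ we are given $F^2=-p$, hence $p^{-1}F^2=-\id$ on $\wt N$. As $\wt N$ generates $N=\wt N\otimes_{\Z_{p^2}}W$ over $W$ and $p^{-1}F^2$ is $\sigma^2$-semilinear, it follows that $p^{-1}F^2=-\Phi$ on all of $N$, where $\Phi\colon N\to N$ is the unique $\sigma^2$-semilinear automorphism with $\Phi|_{\wt N}=\id$; concretely $\Phi=\id_{\wt N}\otimes\sigma^2$ (note $\sigma^2$ fixes $\Z_{p^2}=W(\F_{p^2})$, so this is well defined) and $N^{\Phi}=\wt N$. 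Since $\Phi$ fixes $V\wt N$, it descends to the $\sigma^2$-semilinear operator $\id\otimes\sigma^2$ on $N/VN=(\wt N/V\wt N)\otimes_{\F_{p^2}}k$, which is exactly the Frobenius cutting out the $\F_{p^2}$-structure on $\bfP^1\cong\bfP(N/VN)$ coming from $\wt N$.

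Then I conclude. Because $-1$ acts trivially on lines, $(p^{-1}F^2)M=M\iff \Phi(M)=M$. As $pN\subseteq VN\subseteq M\subseteq N$, the module $M$ is determined by the line $L:=M/VN\subseteq N/VN$, and (since $VN=V\wt N\otimes W$ is $\Phi$-stable) $\Phi(M)=M\iff \Phi(L)=L$. Finally, by Galois descent for $k/\F_{p^2}$ — valid since $k$ is a field with $k^{\sigma^2}=\F_{p^2}$ — a $k$-line in $(\wt N/V\wt N)\otimes_{\F_{p^2}}k$ is stable under $\id\otimes\sigma^2$ if and only if it is defined over $\F_{p^2}$. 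Combining: $M$ is superspecial $\iff \Phi(L)=L\iff L\in\bfP(\wt N/V\wt N)\iff \xi\in\bfP^1(\F_{p^2})$.

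The only delicate point is the middle step: the identity $F^2=-p$ holds a priori only on the small $\Z_{p^2}$-lattice $\wt N$, and one must argue cleanly that it spreads out to the semilinear identity $p^{-1}F^2=-\Phi$ on all of $N$, and that the resulting $\Phi$ is literally (and not a twist of) the descent datum defining the $\F_{p^2}$-structure on $\bfP^1$. Once that is pinned down, everything else is bookkeeping with semilinearity and colengths.
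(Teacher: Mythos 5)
Your argument is correct, and since the paper introduces this lemma with ``It is easy to show'' and supplies no proof, there is no official argument to compare against; what the authors presumably had in mind is the coordinate form of exactly your criterion. Namely, with $M=\Span\langle pe_1,pe_3,e_2,e_4,v\rangle$, $v=a'e_1+b'e_3$, one has $FM+VM=pN+W\,Fv+W\,Vv$, and modulo $pN$ the images of $Fv$ and $Vv$ are $a^pe_2+b^pe_4$ and $-(a^{1/p}e_2+b^{1/p}e_4)$; hence $a(M)=2$ iff these are proportional, i.e.\ iff $[a^{p^2}:b^{p^2}]=[a:b]$, i.e.\ iff $\xi\in\bfP^1(\F_{p^2})$. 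Your intrinsic reformulation (superspecial $\Leftrightarrow FM=VM\Leftrightarrow p^{-1}F^2M=M$, with $-p^{-1}F^2$ identified with the $\sigma^2$-semilinear map $\Phi$ fixing $\wt N$, hence inducing the Frobenius of the $\F_{p^2}$-structure on $\bfP(N/VN)$) is an equivalent and arguably cleaner packaging of the same computation. The ``delicate point'' you flag at the end is in fact already settled by your own remark: $p^{-1}F^2$ and $-\Phi$ are both $\sigma^2$-semilinear and agree on the $W$-basis $e_1,\dots,e_4$ of $N$, which lies in $\wt N$, so they agree on $N$; and since $\Phi$ preserves $VN=V\wt N\otimes_{\Z_{p^2}}W$, it induces $\id\otimes\sigma^2$ on $N/VN=(\wt N/V\wt N)\otimes_{\F_{p^2}}k$, which is by definition the descent datum of the chosen $\F_{p^2}$-structure, so no twist can intervene. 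Two small remarks: the colength facts $\dim_kM/FM=\dim_kM/VM=2$ do not require invoking that $M$ is the module of an abelian surface --- they follow from $[N:M]=p$, $[N:FN]=[N:VN]=p^2$ and injectivity of $F$ and $V$; and ``superspecial $\Leftrightarrow a(M)=2$'' for rank-$4$ supersingular modules (equivalently $FM=VM$) is the standard criterion of Oort, which is fine to quote, as the paper itself works with $a$-numbers.
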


\def\Span{{\rm Span}}
Choose a $W$-basis $e_1, e_2, e_3, e_4$ for $N$ such that 
\begin{equation*}
  Fe_1=e_2, \quad Fe_2=-pe_1,\quad Fe_3=e_4, \quad Fe_4=-pe_3. 
\end{equation*}
Note that this is a $W(\F_{p^2})$-basis for $\wt N$.
Write $\xi=[a:b]\in \bfP^1(k)$. The corresponding \dieu module
$M$ is given by
\begin{equation*}
  M=\Span <pe_1, pe_3, e_2, e_4, v>,
\end{equation*}
where $v=a'e_1+b'e_3$ and $a',b'\in W$ are any liftings of $a, b$
respectively. \\

{\bf Case (i):} $\xi\in \bfP^1(\F_{p^2})$. In this case $M$ is
superspecial. We have $\End_{\DM}(M)=M_2(O_D)$.\\

Assume that $\xi\not\in \bfP^1(\F_{p^2})$. In this case $a(M)=1$. If
$\phi\in \End_{\DM}(M)$, then $\phi\in \End_{\DM}(N)$. Therefore,
\begin{equation*}
  \End_{\DM}(M)=\{\, \phi\in \End_{\DM}(N)\, ;\ \phi(M)\subset M\, \}.   
\end{equation*}
We have $\End_{\DM}(N)=\End_{\DM}(\wt N)=M_2(O_D)$. The induced map 
\begin{equation}
  \label{eq:35}
  \pi:\End_{\DM}(\wt N)\to \End_{\DM}(\wt N/V\wt N) 
\end{equation}
is surjective. Put 
\begin{equation*}
  V_0:=\wt N/V\wt N=\F_{p^2}e_1\oplus \F_{p^2}e_3\quad
  \text{and}\quad B_0:=\End_{\F_{p^2}}(V_0).
\end{equation*}
We have 
\begin{equation*}
  \End_{\DM}(\wt N/V\wt N)=\End_{\F_{p^2}}(V_0)=M_2(\F_{p^2}).
\end{equation*}
Put 
\begin{equation*}
\quad B'_0:=\{T\in B_0\, ;\, T(v)\in k\cdot v\,\},
\end{equation*}
where $v=ae_1+be_3\in V_0\otimes_{\F_{p^2}} k$. Therefore, 
$\End_{\DM}(M)=\pi^{-1}(B'_0)$. Since $\xi\not\in \bfP^1(\F_{p^2})$,
$a\neq 0$. We write $\xi=[1:b]$, $v=e_1+b e_3$, and we have
$\F_{p^2}(\xi)=\F_{p^2}(b)$. Write $T=
\begin{pmatrix}
  a_{11} & a_{12} \\ a_{21} & a_{22}
\end{pmatrix} \in B_0$, where $a_{ij}\in \F_{p^2}$. 
From $T(v)\in k v$, we get the condition
\begin{equation}
  \label{eq:39}
  a_{12}b^2+(a_{11}-a_{22})b-a_{21}=0.
\end{equation} 

{\bf Case (ii): $\F_{p^2}(\xi)/\F_{p^2}$ is quadratic.} Write $\xi=[1:b]$.
Suppose $b$ satisfies $b^2=\alpha b+\beta$, where $\alpha,\beta\in
\F_{p^2}$. Plugging this in (\ref{eq:39}), we get 
\begin{equation*}
a_{11}-a_{12}+a_{12}
\alpha=0 \quad \text{and}\quad  a_{12}\beta=a_{21}. 
\end{equation*}
This shows
\begin{equation}\label{eq:312}
  B_0'=\left \{ t_1 I+t_2
  \begin{pmatrix}
    0 & 1 \\ \beta & \alpha
  \end{pmatrix}\,;\, t_1, t_2\in \F_{p^2}\,\right \}\simeq \F_{p^2}(\xi),
\end{equation}
where $X^2-\alpha X-\beta$ is the minimal polynomial of $b$.

{\bf Case (iii): $\xi\not\in \bfP^1(\F_{p^2})$ and
  $\F_{p^2}(\xi)/\F_{p^2}$ is not quadratic.} In this 
case $a_{12}=a_{21}=0$ and $a_{11}=a_{22}$. We have
\begin{equation*}
  B_0'=\left \{
  \begin{pmatrix}
    a & 0 \\ 0 & a
  \end{pmatrix}\,;\, a\in \F_{p^2}\,\right \}.
\end{equation*}

We conclude

\begin{prop}\label{32}
  Let $A$ be a supersingular surface over $k$ and $M$ be the
  associated covariant \dieu module. Suppose that $A$ is
  represented by a pair $(A_1,\xi)$, where $A_1$ is a superspecial
  abelian surface and $\xi\in \bfP^1_{A_1}(k)$.
  Let $\pi:M_2(O_D)\to M_2(\F_{p^2})$ be the
  natural projection.

(1) If $\F_{p^2}(\xi)=\F_{p^2}$, then $\End_{\DM}(M) = M_2(O_D)$.

(2) If $[\F_{p^2}(\xi):\F_{p^2}]=2$, then 
\begin{equation*}
  \End_{\DM}(M)\simeq \{\phi\in M_2(O_D)\, ; \pi(\phi)\in B'_0\, \},
\end{equation*}
where $B_0'\subset M_2(\F_{p^2})$ is a subalgebra
isomorphic to $\F_{p^2}(\xi)$. 

(3) If it is not in the case (1) or (2), then 
\begin{equation*}
  \End_{\DM}(M)\simeq \left \{\phi\in M_2(O_D)\, ; \pi(\phi)=  \begin{pmatrix}
    a & 0 \\ 0 & a
  \end{pmatrix},\, a\in \F_{p^2} \, \right \}.
\end{equation*}
\end{prop}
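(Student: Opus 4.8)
The plan is to assemble Proposition~\ref{32} directly from the local computation already carried out in \S\ref{sec:32}, organizing the three cases so that each one falls out of the preceding analysis. First I would dispose of case (1). By Lemma~\ref{31}, the hypothesis $\F_{p^2}(\xi)=\F_{p^2}$ means $\xi\in\bfP^1(\F_{p^2})$, so $M$ is superspecial; this is precisely Case (i) above, where $\End_{\DM}(M)=M_2(O_D)$. Nothing further is needed.

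For cases (2) and (3) the common setup is that $\xi\notin\bfP^1(\F_{p^2})$, so $a(M)=1$. Here I would recall the key structural observation: any $\phi\in\End_{\DM}(M)$ extends to $\End_{\DM}(N)$ (since $M_1=(F,V)M$ is intrinsic to $M$, and $N$ is the module with $VN=M_1$), so $\End_{\DM}(M)=\{\phi\in\End_{\DM}(N): \phi(M)\subset M\}$. Using $\End_{\DM}(N)=\End_{\DM}(\wt N)=M_2(O_D)$ and the surjection $\pi:\End_{\DM}(\wt N)\to\End_{\DM}(\wt N/V\wt N)=\End_{\F_{p^2}}(V_0)=M_2(\F_{p^2})$ of \eqref{eq:35}, the condition $\phi(M)\subset M$ translates — after writing $M$ in the explicit basis $pe_1,pe_3,e_2,e_4,v$ with $v=e_1+be_3$ — into the condition that $\pi(\phi)$ preserve the line $k\cdot v$ in $V_0\otimes_{\F_{p^2}}k$. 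Thus $\End_{\DM}(M)=\pi^{-1}(B_0')$ with $B_0'=\{T\in M_2(\F_{p^2}): T(v)\in k\cdot v\}$, which is exactly what the displayed computation leading to \eqref{eq:39} establishes. It then remains to identify $B_0'$ in the two subcases: equation \eqref{eq:39}, namely $a_{12}b^2+(a_{11}-a_{22})b-a_{21}=0$, says that the vector $(a_{12},a_{11}-a_{22},-a_{21})$ annihilates $(b^2,b,1)$, so $B_0'$ is cut out by $3-[\F_{p^2}(b):\F_{p^2}]$ independent linear conditions. If $[\F_{p^2}(\xi):\F_{p^2}]=2$ this is the $2$-dimensional algebra \eqref{eq:312}, isomorphic to $\F_{p^2}(\xi)$ via $t_1I+t_2\left(\begin{smallmatrix}0&1\\\beta&\alpha\end{smallmatrix}\right)\mapsto t_1+t_2 b$; if $\F_{p^2}(\xi)/\F_{p^2}$ is not quadratic (so $b$ has degree $>2$, and $1,b,b^2$ are $\F_{p^2}$-linearly independent) all three coefficients vanish, forcing $a_{12}=a_{21}=0$, $a_{11}=a_{22}$, which is the scalar subalgebra of case (3).

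The only point requiring a little care — and the closest thing to an obstacle — is the claim that $\phi(M)\subset M$ is equivalent to the single condition $\pi(\phi)(v)\in k\cdot v$; one must check that the images of the other generators $pe_1,pe_3,e_2,e_4$ automatically lie in $M$ for any $\phi\in\End_{\DM}(\wt N)=M_2(O_D)$, which follows because $\End_{\DM}(N)$ stabilizes $FN$, $VN=M_1$, and $pN$, all of which are contained in $M$, and because the reduction $\pi(\phi)$ acting on $\bar v\in V_0\otimes k$ determines $\phi(v)\bmod M_1$. I would also note that the isomorphism in \eqref{eq:312} and the scalar description are manifestly subalgebras of $M_2(\F_{p^2})$, so the descriptions of $\End_{\DM}(M)$ as $\pi^{-1}(B_0')$ are indeed orders in $M_2(D)$ as claimed. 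With these verifications in hand, Proposition~\ref{32} is simply the tabulation of the three cases, and the proof is complete.
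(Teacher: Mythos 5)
Your proposal is correct and follows essentially the same route as the paper: the paper's ``proof'' of Proposition~\ref{32} is exactly the case-by-case computation in \S\ref{sec:32} (reduction to $\End_{\DM}(M)=\pi^{-1}(B_0')$ via $\phi\in\End_{\DM}(N)$, then analyzing equation \eqref{eq:39} according to the degree of $b$), which you have reassembled faithfully. Your added verification that $\phi(M)\subset M$ reduces to the single condition $\pi(\phi)(\bar v)\in k\cdot\bar v$ is a point the paper leaves implicit, and it is a worthwhile check.
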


\section{Proof of Theorem~\ref{11}}
\label{sec:04}

\subsection{The automorphism groups}
\label{sec:41}

Let $x=(A,\lambda)$ be a supersingular principally polarized abelian
surfaces over $k$. Let $x_1=(A_1,\lambda_1)$ be an element in
$\Lambda^*_{2,p}$ such that there is a degree-$p$ isogeny
$\varphi:(A_1,\lambda_1)\to (A,\lambda)$ of polarized abelian
varieties. Write $\xi=[a:b]\in \bfP^1(k)$ the point corresponding to the
isogeny $\varphi$. We choose an $\F_{p^2}$-structure on $\bfP^1$ as
in \S \ref{sec:32}.
Let $(M_1, \<\, , \>)\subset (M, \<\, , \>)$ be the covariant \dieu
modules associated to $\varphi:(A_1,\lambda_1)\to (A,\lambda)$. Let
$N$ be the submodule in $M_1\otimes \Q_p$ such that $VN=M_1$, and put
$\<\, , \>_N=p \<\, , \>$. One has an isomorphism $(N,\<\, , \>_N)\simeq
(M_1,\<\, , \>)$ of quasi-polarized \dieu modules. Put 
\begin{equation*}
  \begin{array}[c]{c}
  U_{x}:=G_x(\Z_p)=\Aut_{\DM}(M,\<\, , \>),\\
  U_{x_1}:=G_{x_1}(\Z_p)=\Aut_{\DM}(M_1,\<\, , \>)=\Aut_{\DM}(N,\<\, , \>_N). 
  \end{array}
\end{equation*}

Choose a $W$-basis $e_1, e_2, e_3, e_4$ for $N$ such that 
\begin{equation*}
  \begin{array}[c]{c}
  Fe_1=e_2, \quad Fe_2=-pe_1,\quad Fe_3=e_4, \quad Fe_4=-pe_3, \\
  \<e_1, e_3\>_N=-\<e_3, e_1\>_N=1, \quad \<e_2, e_4\>_N=-\<e_4, e_2\>_N=p, 
  \end{array}
\end{equation*}
and $\<e_i, e_j\>=0$ for all remaining $i,j$. The \dieu module $M$ is
given by
\begin{equation*}
  M=\Span <pe_1, pe_3, e_2, e_4, v>,
\end{equation*}
where $v=a'e_1+b'e_3$ and $a',b'\in W$ are any liftings of $a, b$
respectively. \\

{\bf Case (i): $\xi\in \bfP^1(\F_{p^2}$)}. In this case $A$ is
superspecial. One has $\Lambda_x=\Lambda_2$ and, by Corollary~\ref{26}, 
\begin{equation*}
  \Mass(\Lambda_x)=\frac{(p-1)(p^2+1)}{5760}. 
\end{equation*}

In the remaining of this section, we treat the case $\xi\not\in
\bfP^1(\F_{p^2})$. One has
\begin{equation*}
  U_x=\{\phi\in U_{x_1}\, ;\, \phi(M)=M\, \},
\end{equation*}
and, by Proposition~\ref{24} and Corollary~\ref{26},
\begin{equation}
  \label{eq:46}
 \Mass(\Lambda_{x})=\Mass(\Lambda_{x_1})\cdot\mu
  (U_{x_1}/U_x)=\frac{p^2-1}{5760} [U_{x_1}:U_x].   
\end{equation}

Recall that $V_0=\wt N/V\wt N$, which is 
equipped with the non-degenerate alternating pairing 
$\<\, ,\>:V_0\times V_0\to \F_{p^2}$ induced
from $\<\, ,\>_N$. The map (\ref{eq:35}) induces a group homomorphism
\begin{equation*}
  \pi: U_{x_1}\to \Aut(V_0,\<\, ,\>)=\SL_2(\F_{p^2}). 
\end{equation*}
\begin{prop}\label{41}
  The map $\pi$ above is surjective. 
\end{prop}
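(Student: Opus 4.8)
The plan is to show that the reduction map $\pi \colon U_{x_1} = \Aut_{\DM}(N, \langle\,,\rangle_N) \to \SL_2(\F_{p^2})$ is surjective by lifting an arbitrary symplectic transformation of $V_0 = \wt N/V\wt N$ to an automorphism of the quasi-polarized Dieudonn\'e module $\wt N$ over $W(\F_{p^2})$, and then extending $W$-linearly to $N$. First I would recall from \S\ref{sec:32} that $\End_{\DM}(N) = \End_{\DM}(\wt N) = M_2(O_D)$ and that the projection $\pi \colon \End_{\DM}(\wt N) \to \End_{\F_{p^2}}(V_0) = M_2(\F_{p^2})$ is already known to be surjective (equation~\eqref{eq:35}); the content here is to refine this to the unit groups cut out by the symplectic form. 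So the key point is: given $g \in \SL_2(\F_{p^2}) = \Aut(V_0,\langle\,,\rangle)$, produce $\phi \in M_2(O_D)^\times$ with $\phi$ preserving $\langle\,,\rangle_N$ and $\pi(\phi) = g$.

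The concrete approach is to work in the explicit $W(\F_{p^2})$-basis $e_1,e_2,e_3,e_4$ of $\wt N$ with $Fe_1 = e_2$, $Fe_2 = -pe_1$, $Fe_3 = e_4$, $Fe_4 = -pe_3$ and the pairing normalized as in \S\ref{sec:41}. Here $V\wt N$ is spanned (over $\F_{p^2}$) by the images of $e_2 = Fe_1$ and $e_4 = Fe_3$ — more precisely, using $FV = p$ one sees $V\wt N$ is the span of $pe_1, pe_3, e_2, e_4$, so $V_0 = \F_{p^2}\bar e_1 \oplus \F_{p^2}\bar e_3$ with induced symplectic form $\langle \bar e_1,\bar e_3\rangle = 1$. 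An automorphism $\phi$ of $\wt N$ commuting with $F$ is determined by where it sends $e_1$ and $e_3$ (the rest being forced by $F$-linearity), say $\phi(e_1) = x_{11}e_1 + x_{13}e_3$, $\phi(e_3) = x_{31}e_1 + x_{33}e_3$ modulo the part in $\F_{p^2}e_2 \oplus \F_{p^2}e_4$, where the coefficients live in $W(\F_{p^2})$. I would first lift the given $g = (g_{ij}) \in \SL_2(\F_{p^2})$ to the $W(\F_{p^2})$-matrix $\phi$ acting on $\langle e_1, e_3\rangle_W$ by the \emph{same} matrix (Teichm\"uller lifts of the entries), extend by $F$-linearity to $e_2, e_4$, and check this $\phi$ lies in $\End_{\DM}(\wt N)$ and is invertible (its reduction $g$ is invertible, and $\det$ over the local ring $W(\F_{p^2})$ is then a unit). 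Then I would verify directly that $\phi$ preserves $\langle\,,\rangle_N$: this reduces to checking $\langle \phi e_1, \phi e_3\rangle_N = \langle e_1, e_3\rangle_N = 1$ and $\langle \phi e_2, \phi e_4\rangle_N = p$, and the second follows from the first by $F$-equivariance of the pairing (the relation $\langle Fx, Fy\rangle_N = p\,\langle x,y\rangle_N^\sigma$). The first is exactly the condition $\det g = 1$, up to a Frobenius twist on the Teichm\"uller coefficients which is harmless. Finally I would extend $\phi$ from $\wt N$ to $N = \wt N \otimes_{W(\F_{p^2})} W$ by base change; it remains $F$-compatible and pairing-preserving, hence lies in $U_{x_1}$, and $\pi(\phi) = g$ by construction.

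The main obstacle I anticipate is bookkeeping with the semilinearity: one must be careful that the condition $g \in \SL_2$ (as opposed to $\GL_2$) is precisely what makes the lift symplectic, and that the Frobenius $\sigma$ acting on $W(\F_{p^2})$ does not spoil this — on Teichm\"uller lifts $\sigma$ acts as the $p$-power map, matching the $\F_{p^2}$-Frobenius, so the identities close up. A secondary point is confirming that $V\wt N$ is what I claimed and that the induced form on $V_0$ is genuinely the nondegenerate alternating form whose automorphism group is $\SL_2(\F_{p^2})$ (for $2\times 2$ matrices, symplectic = determinant one, so $\Sp_2 = \SL_2$), which is stated in the excerpt and which I would simply cite. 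Once the symplectic lift is constructed explicitly in coordinates, surjectivity of $\pi$ is immediate, and no deeper input (strong approximation, etc.) is needed — it is a direct linear-algebra construction over $W(\F_{p^2})$.
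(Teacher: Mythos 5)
Your strategy is genuinely different from the paper's. The paper proves surjectivity by a $\Pi$-adic successive approximation: starting from any $T_0\in M_2(O_D)$ lifting $\bar\phi$, it corrects $T_n$ to $T_{n+1}=T_n+B_n\Pi^{n+1}$ so that the hermitian identity $(T_n^*)^{(1)}wT_n\equiv w$ holds modulo one higher power of $\Pi$, the inductive step reducing to the elementary solvability of $\ol C+\ol Y\pm\ol Y^{t(1)}=0$ in $M_2(\F_{p^2})$ (Lemma~\ref{44}). You instead exhibit an explicit subgroup of $U_{x_1}$, namely matrices with entries in $W(\F_{p^2})\subset O_D$ acting on the $O_D$-basis $e_1,e_3$ and extended by $F$-linearity, and observe that it already surjects onto $\SL_2(\F_{p^2})$. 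Your route is shorter and avoids the approximation machinery entirely; the paper's argument is the one that works without having to guess a splitting, and in effect establishes smoothness of the relevant group scheme rather than just producing a section.

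There is, however, a concrete flaw at your key step. You lift $g$ entrywise by Teichm\"uller representatives and assert that $\langle\phi e_1,\phi e_3\rangle_N=1$ is ``exactly the condition $\det g=1$.'' It is not: that pairing equals $\det\hat g=[g_{11}][g_{22}]-[g_{12}][g_{21}]$, and a difference of Teichm\"uller representatives is not the Teichm\"uller representative of the difference, so $\det\hat g\equiv 1\pmod p$ but in general $\det\hat g\neq 1$. For instance, for $p=5$ and $g=\left(\begin{smallmatrix}2&1\\1&1\end{smallmatrix}\right)$ one has $[2]\equiv 7\pmod{25}$, hence $\det\hat g\equiv 6\not\equiv 1\pmod{25}$. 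Your $\phi$ therefore preserves $\langle\,,\rangle_N$ only modulo $p$, which is not enough. The fix is easy and preserves your argument: choose any lift $\tilde g\in\SL_2(W(\F_{p^2}))$ of $g$ with $\det\tilde g=1$ exactly (such a lift exists by Hensel/smoothness of $\SL_2$, or by lifting three entries arbitrarily and solving for the fourth using that some cofactor of $g$ is a unit). With that choice your computation goes through: $\langle\phi e_1,\phi e_3\rangle_N=\det\tilde g=1$, the identity $\langle Fx,Fy\rangle_N=p\langle x,y\rangle_N^{\sigma}$ gives $\langle\phi e_2,\phi e_4\rangle_N=p(\det\tilde g)^{\sigma}=p$, the cross pairings vanish because $\phi$ preserves the decomposition spanned by $\{e_1,e_3\}$ and $\{e_2,e_4\}$, and $\phi$ is invertible since its determinant is a unit; hence $\phi\in U_{x_1}$ and $\pi(\phi)=g$.
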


The proof is given in Subsection~\ref{sec:42}. 


\begin{lemma}\label{42}
  One has $\ker \pi\subset U_x$.
\end{lemma}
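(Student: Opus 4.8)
The plan is to show that any $\phi \in \ker\pi$ automatically preserves the Dieudonn\'e module $M$, so that $\phi \in U_x = \{\phi \in U_{x_1} : \phi(M) = M\}$. First I would recall the explicit description: $U_{x_1} = \Aut_{\DM}(N, \<\,,\>_N) = \End_{\DM}(\wt N)^\times \cap (\text{unitary condition})$, and since $\End_{\DM}(\wt N) = M_2(O_D)$, an element $\phi \in \ker\pi$ is one whose reduction modulo the maximal ideal (i.e.\ the induced action on $V_0 = \wt N/V\wt N$) is the identity. Concretely, writing $\phi$ in the $W$-basis $e_1, e_2, e_3, e_4$ adapted to $F$, the condition $\pi(\phi) = \mathrm{id}$ means $\phi(e_1) \equiv e_1$ and $\phi(e_3) \equiv e_3$ modulo $V\wt N$, hence $\phi(e_1) = e_1 + (\text{terms in } Fe_1, Fe_3, \text{and } p\cdot N)$ and similarly for $e_3$; more precisely $\phi - \mathrm{id}$ maps $\wt N$ into $V\wt N = \Span_W\<e_2, e_4, pe_1, pe_3\>$.

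The key computation is then to check that $(\phi - \mathrm{id})(v) \in M$ for $v = a'e_1 + b'e_3$, since $M = \Span\<pe_1, pe_3, e_2, e_4, v\>$ and $\phi$ already preserves the sublattice $\Span\<pe_1, pe_3, e_2, e_4\> = VN$ (indeed $\phi$ preserves $N$, hence $VN$, hence this span which is $VN$ — wait, one should note $VN = M_1 \subset M$, and $V\wt N \subseteq VN$; I would verify $\phi(VN) = VN$ from $\phi \in \Aut(N)$). So it remains to see $\phi(v) \in M$. Since $\phi(v) = v + (\phi-\mathrm{id})(v)$ and $(\phi - \mathrm{id})(v) \in (\phi-\mathrm{id})(N) \subseteq V\wt N \subseteq VN \subset M$, we get $\phi(v) \in M$ directly. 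This shows $\phi(M) \subseteq M$, and applying the same to $\phi^{-1}$ (which also lies in $\ker\pi$, as $\pi$ is a homomorphism) gives $\phi(M) = M$.

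The main obstacle — and the point requiring care — is making precise the claim that $(\phi - \mathrm{id})(N) \subseteq V\wt N$ rather than merely $\subseteq V N$. The subtlety is that $\ker\pi$ is defined via the action on $V_0 = \wt N/V\wt N$, an $\F_{p^2}$-vector space, so I need the compatibility between the $W$-structure on $N$ and the $W(\F_{p^2})$-structure on $\wt N$: an element of $U_{x_1}$ lies in $\End_{\DM}(\wt N) = M_2(O_D)$ and hence is already defined over $W(\F_{p^2})$, so its reduction lands in $\End_{\F_{p^2}}(V_0)$ and $\ker\pi$ genuinely consists of those $\phi$ with $(\phi - \mathrm{id})(\wt N) \subseteq V\wt N$. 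Once this is pinned down, the lattice inclusion $V\wt N \subseteq VN \subseteq M$ closes the argument; no further case analysis (cases (ii), (iii) of Proposition~\ref{32}) is needed, which is what makes the lemma clean and uniform in $\xi$.
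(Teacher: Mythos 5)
Your argument is correct and is essentially the paper's own proof: the paper likewise writes $\phi(e_1)=e_1+f_1$, $\phi(e_3)=e_3+f_3$ with $f_1,f_3\in VN$ and observes that, since $M$ is generated by $VN$ and $v$, it suffices to note $\phi(v)=v+a'f_1+b'f_3\in M$. The only (harmless) imprecision is your intermediate containment $(\phi-\mathrm{id})(N)\subseteq V\wt N$: the left-hand side is a $W$-submodule while $V\wt N$ is only a $W(\F_{p^2})$-lattice, so the correct statement is $(\phi-\mathrm{id})(N)\subseteq W\cdot V\wt N=VN$, which still lies in $M$ and closes the argument exactly as you intend.
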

\begin{proof}
  Let $\phi\in \ker \pi$. Write $\phi(e_1)=e_1+f_1,
  \phi(e_3)=e_3+f_3$, where 
  $f_1,f_3\in VN$. Since $M$ is generated by $VN$ and $v$, it suffices
  to check $\phi(v)=v+a'f_1+b'f_3\in M$; this is clear. \qed
\end{proof}

{\bf Case (ii): $[\F_{p^2}(\xi):\F_{p^2}]= 2$.} By Proposition~\ref{32} and
  Lemma~\ref{42}, we have $\pi: U_{x_1}/U_x\simeq
  \SL_2(\F_{p^2})/\F_{p^2}(\xi)^\times_1.$ This shows 
\[ [U_{x_1}: U_x]=(p^4-p^2). \]

{\bf Case (iii): $[\F_{p^2}(\xi):\F_{p^2}]\ge 3$.} By Proposition~\ref{32} and
  Lemma~\ref{42}, we have $\pi: U_{x_1}/U_x\simeq
  \SL_2(\F_{p^2})/\{\pm 1\}$. This shows 
\[ [U_{x_1}: U_x]=|\PSL_2(\F_{p^2})|. \]

From Cases (i)-(iii) above and equation (\ref{eq:46}), Theorem~\ref{11}
is proved. 

\subsection{Proof of Proposition~\ref{41}}
\label{sec:42}

Write 
\begin{equation*}
  O_D=W(\F_{p^2})[\Pi], \quad \Pi^2=-p,\quad \Pi a=a^\sigma \Pi,\
  \forall\,  a\in 
  W(\F_{p^2}).
\end{equation*}
The canonical involution is given by $(a+b\Pi)^*=a^\sigma-b\Pi$. With
the basis $1, \Pi$, we have the embedding 
\begin{equation*}
  O_D\subset M_2(W(\F_{p^2})), \quad a+b\Pi=
  \begin{pmatrix}
    a & -p b^\sigma \\ b & a^\sigma
  \end{pmatrix}.
\end{equation*}
Note that this embedding is compatible with the canonical involutions. 
With respect to the basis $e_1,e_2,e_3,e_4$, an element $\phi\in
\End_{\DM}(N)$ can be written as
\begin{equation*}
  T=(T_{ij})\in M_2(O_D)\subset M_4(W(\F_{p^2})), \quad
  T_{ij}=a_{ij}+b_{ij}\Pi=\begin{pmatrix}
    a_{ij} & -p b_{ij}^\sigma \\ b_{ij} & a_{ij}^\sigma
  \end{pmatrix}. 
\end{equation*}

Since $\phi$ preserves the pairing $\<\, ,\>_N$, we get the condition
in $M_4(\Q_{p^2})$:
\begin{equation}
  \label{eq:411}
  T^t 
  \begin{pmatrix}
    & J \\ -J & 
  \end{pmatrix} T=\begin{pmatrix}
    & J \\ -J & 
  \end{pmatrix}, \quad J=
  \begin{pmatrix}
    1 & \\
    & p 
  \end{pmatrix}. 
\end{equation}
Note that 
\begin{equation*}
  w_0 T_{ji}^* w_0^{-1}=T_{ji}^t, \quad w_0=
  \begin{pmatrix}
     & -1 \\1 & 
  \end{pmatrix}\in M_2(\Z_{p^2}). 
\end{equation*}
The condition (\ref{eq:411}) becomes
\begin{equation}
  \label{eq:413}
  \begin{pmatrix}
    w_0 & \\ & w_0
  \end{pmatrix}
  T^*  \begin{pmatrix}
    w_0^{-1} & \\ & w_0^{-1}
  \end{pmatrix}
  \begin{pmatrix}
    & J \\ -J & 
  \end{pmatrix} T=\begin{pmatrix}
    & J \\ -J & 
  \end{pmatrix}. 
\end{equation}
Since
\begin{equation*}
  \begin{pmatrix}
    w_0^{-1} & \\ & w_0^{-1}
  \end{pmatrix}
  \begin{pmatrix}
    & J \\ -J & 
  \end{pmatrix} =\begin{pmatrix}
    & -\Pi \\ \Pi & 
  \end{pmatrix} =\Pi \begin{pmatrix}
    & -1 \\ 1 & 
  \end{pmatrix}\in M_2(O_D),  
\end{equation*} 
we have 

\begin{lemma}\label{43}
  The group $U_{x_1}$ is the group of $O_D$-linear automorphisms on
  the standard $O_D$-lattice $O_D\oplus O_D$ which preserve that
  quaternion hermitian form $
  \begin{pmatrix}
    0 & -\Pi \\ \Pi & 0
  \end{pmatrix}$.
\end{lemma}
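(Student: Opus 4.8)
The plan is to read the claim off the matrix calculation of \S\ref{sec:42}, with no further geometric input. First I would record the dictionary: as set up there, $N$ is free of rank $2$ over $O_D$, with $W(\F_{p^2})$ acting by scalars and $\Pi$ acting as $F$; consequently $\End_{\DM}(N)=\End_{\DM}(\wt N)=M_2(O_D)$, and $\Aut_{\DM}(N)=\GL_2(O_D)$ is precisely the group of $O_D$-linear automorphisms of the standard lattice $O_D\oplus O_D$. Under this dictionary an element $\phi$ of $U_{x_1}$ becomes a matrix $T=(T_{ij})\in\GL_2(O_D)$, and the statement that $\phi$ preserves the polarization is exactly condition (\ref{eq:411}).

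Next I would rewrite (\ref{eq:411}) in quaternion hermitian form, using only the identities already assembled. Since the embedding $O_D\hookrightarrow M_2(W(\F_{p^2}))$ is compatible with the canonical involutions and $w_0 T_{ij}^* w_0^{-1}=T_{ij}^t$, the $4\times 4$ transpose $T^t$ equals $\diag(w_0,w_0)\,T^*\,\diag(w_0^{-1},w_0^{-1})$, where $T^*$ now denotes the conjugate-transpose of $T$ inside $M_2(O_D)$ (this is the notation of (\ref{eq:413})). Substituting this into (\ref{eq:411}) yields (\ref{eq:413}); multiplying (\ref{eq:413}) on the left by $\diag(w_0^{-1},w_0^{-1})$ and invoking the identity $\diag(w_0^{-1},w_0^{-1})\left(\begin{smallmatrix} 0 & J \\ -J & 0 \end{smallmatrix}\right)=\left(\begin{smallmatrix} 0 & -\Pi \\ \Pi & 0 \end{smallmatrix}\right)\in M_2(O_D)$ established just above then collapses the condition to
\[
T^* H T = H, \qquad H=\begin{pmatrix} 0 & -\Pi \\ \Pi & 0 \end{pmatrix}.
\]

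Finally, because $\Pi^*=-\Pi$ we have $H^*=H$, so $H$ defines a quaternion hermitian form $h(x,y)=x^* H y$ on $O_D\oplus O_D$, and the displayed equation $T^* H T=H$ is exactly the condition that $T\in\GL_2(O_D)$ preserve $h$. Thus $U_{x_1}$ is the automorphism group of the hermitian $O_D$-lattice $(O_D\oplus O_D, H)$, which is the assertion of the lemma. The only step needing care — and where a sign or transpose error would hide — is the block-transpose bookkeeping: one must check that converting the $4\times 4$ transpose $T^t$ back into a matrix over $O_D$ produces the conjugate-transpose in $M_2(O_D)$ and not merely the entrywise canonical involution, and keep straight on which side $\phi$ acts on $O_D\oplus O_D$. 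Once these conventions are fixed, everything else is the formal algebra already in place.
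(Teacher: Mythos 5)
Your proposal is correct and follows essentially the same route as the paper: Lemma~\ref{43} is there stated as the summary of the displayed computation turning (\ref{eq:411}) into (\ref{eq:413}) and then, via the identity $\operatorname{diag}(w_0^{-1},w_0^{-1})\left(\begin{smallmatrix} & J \\ -J & \end{smallmatrix}\right)=\left(\begin{smallmatrix} & -\Pi \\ \Pi & \end{smallmatrix}\right)$, into the hermitian condition $T^*HT=H$, exactly as you do. Your added checks — that the $4\times4$ transpose corresponds to the conjugate-transpose in $M_2(O_D)$ and that $H^*=H$ — are the points the paper leaves implicit, and you handle them correctly.
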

We also write (\ref{eq:413}) as 
\begin{equation}
  \label{eq:415}
  \Pi^{-1} T^* \Pi w T= w, \quad w=
  \begin{pmatrix}
    & -1 \\ 1 &  
  \end{pmatrix}\in M_2(O_D). 
\end{equation}

Notation.
For an element $T \in M_m(D)$ and
$n\in \Z$, write $T^{(n)}=\Pi^n T \Pi^{-n}$.
In particular,
if $T=(T_{ij})\in M_m (\Q_{p^2})\subset M_m(D)$, then
$T^{(n)}=(T_{ij}^{\sigma^n})$.
If $T \in M_m(O_D)$,
denote by $\ol{T} \in M_m(\F_{p^2})$
the reduction mod $\Pi$.

Suppose we are given $\bar \phi\in \SL_{2}(\F_{p^2})$, we must find
an element $T\in M_2(O_D)$ satisfying (\ref{eq:415}). We show that 
there is a sequence of  elements $T_n\in M_2(O_D)$ 
for $n\ge 0$ satisfying the conditions  
\begin{equation}
  \label{eq:416}
  (T_n^*)^{(1)} w T_n\equiv w \pmod{\Pi^{n+1}},\quad
  T_{n+1}\equiv T_n \pmod{\Pi^{n+1}}, \quad
  \text{and}\quad \ol T_0=\ol \phi.   
\end{equation}
Suppose there is already an element $T_n \in
M_2(O_D)$ for some $n\ge 0$ that satisfies
\begin{equation*}
  (T_n^*)^{(1)} w T_n\equiv w \pmod{\Pi^{n+1}}.  
\end{equation*}
Put $T_{n+1}:=T_n+B_n \Pi^{n+1}$, where $B_n\in M_2(O_D)$, and put $X_n:=
(T_n^*)^{(1)} w T_n$. Suppose $X_n\equiv w+C_n \Pi^{n+1}
\pmod{\Pi^{n+2}}$. One computes that
\begin{equation*}
  \begin{split}
    X_{n+1}& \equiv T^{*(1)}_n w T_n+T^{*(1)}_n w B_n \Pi^{n+1}+(\Pi^{n+1})^*
    B_n^{*(1)} w T_n \pmod{\Pi^{n+2}}\\
    & \equiv w+C_n \Pi^{n+1}+ T^{*(1)}_n w B_n \Pi^{n+1}+(-1)^{n+1}B_n^{*(n)}
    w T_n^{(n+1)} \Pi^{n+1} \pmod{\Pi^{n+2}}. 
  \end{split}
\end{equation*}
Therefore, we require an element $B_n\in M_2(O_D)$ satisfying
\begin{equation*}
  \ol C_n+\ol T^{t}_n w \ol B_n +(-1)^{n+1}\ol B_n^{t (n+1)}
    w \ol T_n^{(n+1)}=0. 
\end{equation*}
Put $\ol Y_n:=\ol T^{t}_n w \ol B_n$. As $\ol Y_n^t=- \ol B_n^t w \ol
T^{}_n$, we need to solve the equation
\begin{equation*}
  \ol C_n+\ol Y_n+(-1)^n \ol Y_n^{t(n+1)}=0, 
\end{equation*} 
or equivalently the equation
\begin{equation*}
  \begin{cases}
    \ \ol C_n+\ol Y_n+ \ol Y_n^{t(1)}=0, & \text{if $n$ is
    even}, \\
    \ \ol C_n+\ol Y_n- \ol Y_n^{t}=0, & \text{if $n$ is
    odd}. 
  \end{cases}
\end{equation*} 
It is easy to compute that $X_n^*=-X_n^{(1)}$. From this it follows
that 
\[ (-1)^{n+1} C_n^{*(n+1)} \Pi^{n+1}\equiv -C_n^{(1)} \Pi^{n+1}
\pmod{\Pi^{n+2}}, \] 
or simply $(-1)^n \ol C_n^{t(n)}=\ol C_n^{(1)}$. This gives the condition
\begin{equation*}
  \begin{cases}
   \  \ol C^t_n=\ol C^{(1)}_n, & \text{if $n$ is even,}\\ 
   \  -\ol C^t_n=\ol C^{}_n, & \text{if $n$ is odd.}
  \end{cases}
\end{equation*}

By the following lemma, we prove the existence of $\{T_n\}$ satisfying
(\ref{eq:416}). Therefore, Proposition~\ref{41} is proved. 

\begin{lemma}\label{44}
  Let $C$ be an element in the matrix algebra $M_m(\F_{p^2})$. 

(1) If $\,C^t=C^{(1)}$, then there is an element $Y\in M_m(\F_{p^2})$
    such that $C+Y+Y^{t(1)}=0$. 

(2) If $-C^t=C$, then there is an element $Y\in M_m(\F_{p^2})$
    such that $C+Y-Y^{t}=0$. 
\end{lemma}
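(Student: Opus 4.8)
The plan is to treat the two cases by the same linear-algebra device: regard the Frobenius twist $C \mapsto C^{(1)}$ (i.e.\ entrywise $\sigma$) as a semilinear involution on $M_m(\F_{p^2})$ and exhibit $Y$ by a diagonal/upper-triangular construction. First I would dispose of part~(2), which does not involve the twist: the hypothesis $C^t = -C$ says $C$ is alternating, so I would simply set $Y$ to be the "upper-triangular half" of $C$, that is $Y_{ij} = C_{ij}$ for $i<j$, $Y_{ij}=0$ for $i\ge j$ (note the diagonal entries of an alternating matrix over a field of characteristic $\ne 2$ vanish, and when $p=2$ one must be slightly more careful, but the diagonal of $C$ is zero by $C^t=-C$ regardless if we interpret "alternating" correctly — if not, absorb the diagonal into $Y$ arbitrarily). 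Then $Y - Y^t$ recovers $C$ off the diagonal and is zero on the diagonal, matching $C$, so $C + Y - Y^t = 0$ after the sign is arranged; I would just double-check the sign convention so that it comes out to $C+Y-Y^t=0$ rather than $-C+Y-Y^t$.

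For part~(1), the hypothesis $C^t = C^{(1)}$ is a "$\sigma$-Hermitian" condition with respect to the form-free pairing. Here I would again split $M_m(\F_{p^2})$ into strictly-upper, diagonal, and strictly-lower parts. For $i<j$: choose $Y_{ij} := -C_{ij}$ and $Y_{ji} := 0$; then the $(i,j)$ entry of $C + Y + Y^{t(1)}$ is $C_{ij} + Y_{ij} + (Y_{ij})^{\sigma} \cdot[\text{wrong index}]$ — more precisely $(Y^{t(1)})_{ij} = (Y_{ji})^{\sigma} = 0$, giving $C_{ij} - C_{ij} + 0 = 0$, while the $(j,i)$ entry is $C_{ji} + Y_{ji} + (Y^{t(1)})_{ji} = C_{ji} + 0 + (Y_{ij})^{\sigma} = C_{ji} - C_{ij}^{\sigma}$, which vanishes precisely because the hypothesis $C^t=C^{(1)}$ reads $C_{ji} = C_{ij}^{\sigma}$. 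So the off-diagonal part works with no condition consumed beyond the hypothesis. For the diagonal $i=i$: the hypothesis gives $C_{ii} = C_{ii}^{\sigma}$, i.e.\ $C_{ii} \in \F_p$; I need $Y_{ii}$ with $C_{ii} + Y_{ii} + Y_{ii}^{\sigma} = 0$, i.e.\ $\mathrm{Tr}_{\F_{p^2}/\F_p}(Y_{ii}) = -C_{ii}$, and since $C_{ii}\in\F_p$ and the trace $\F_{p^2}\to\F_p$ is surjective, such a $Y_{ii}$ exists.

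The only genuine subtlety — the step I expect to be the main (minor) obstacle — is the diagonal case in part~(1) and the characteristic-$2$ bookkeeping in part~(2): one must verify that the relevant trace map $\mathrm{Tr}_{\F_{p^2}/\F_p}$ is surjective (it is, for every $p$, including $p=2$, since $\F_{p^2}/\F_p$ is separable) and that in part~(2) the diagonal of $C$ genuinely vanishes or can be harmlessly absorbed. Everything else is a direct entry-by-entry check. I would present the construction of $Y$ explicitly by the three-region formula above and then verify $C+Y\pm Y^{t(1)}=0$ by comparing entries, invoking the hypothesis exactly where the lower-triangular and diagonal entries need to cancel.
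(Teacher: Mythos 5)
Your construction is the standard elementary one (the paper omits its proof as ``elementary''), and it is complete and correct for part~(1) in all characteristics and for part~(2) when $p$ is odd: the strictly-triangular choice of $Y$ handles the off-diagonal entries using exactly the hypothesis, and on the diagonal of part~(1) the surjectivity of $\mathrm{Tr}_{\F_{p^2}/\F_p}$ (valid for every $p$ since the extension is separable) finishes the argument. The sign slip you flag yourself is harmless (replace $Y$ by $-Y$).

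The one genuine gap is your treatment of the diagonal in part~(2) when $p=2$. Your fallback --- ``absorb the diagonal into $Y$ arbitrarily'' --- cannot work: for every $Y$ the matrix $Y-Y^{t}$ has zero diagonal, since $(Y-Y^{t})_{ii}=Y_{ii}-Y_{ii}=0$, so no choice of $Y$ can produce a nonzero diagonal entry of $-C$. In characteristic $2$ the hypothesis $-C^{t}=C$ only says $C$ is symmetric and does \emph{not} force $C_{ii}=0$ (the identity $2C_{ii}=0$ is vacuous), so the diagonal genuinely survives; indeed for $m=1$, $p=2$, $C=(1)$ the statement of part~(2) as written has no solution. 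Thus either the lemma implicitly assumes $p$ odd, or one must check that the matrices $\ol C_n$ arising in the surjectivity argument of Proposition~4.2 have vanishing diagonal when $p=2$; your proof as written does neither, and the phrase ``absorb the diagonal into $Y$'' is the step that fails.
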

\begin{proof}
  The proof is elementary and hence omitted. \qed
\end{proof}

\begin{remark}
Theorem~\ref{11} also provides another way to look at the
supersingular locus $S_2$ of the Siegel threefold. We used to divide
it into two parts: superspecial locus and non-superspecial
locus. Consider the mass function 
\[ M: S_2\to \Q, \quad x \mapsto \Mass (\Lambda_x). \]
Then the function $M$ divides the supersingular locus $S_2$ 
into 3 locally closed subsets that refine the previous one. 
More generally, we can consider the same function $M$ on the supersingular
locus $S_g$ of the Siegel modular variety of genus $g$. The situation
definitely becomes much more complicated. However, it is worth knowing
whether the following question has the affirmative answer.

 {\bf (Question):} Is the map $M:S_g\to \Q$ a constructible function?  
\end{remark}

\npr{\it Acknowledgments.} The research of C.-F.~Yu was partially
supported by the grants NSC 97-2115-M-001-015-MY3 and AS-98-CDA-M01. 
The research of J.-D. Yu was partially supported by the grant NSC
97-2115-M-002-018.

\end{document}